\documentclass[11pt]{article}

\usepackage{amsmath}
\usepackage{amssymb}
\usepackage{amsthm}
\usepackage{titlesec}
\usepackage{graphicx}
\usepackage{caption}
\usepackage{subcaption}

\usepackage{diagbox}





\usepackage[T1]{fontenc}
\usepackage[utf8]{inputenc}

\usepackage{comment}

\usepackage{indentfirst}
\setlength{\parskip}{\baselineskip}

\usepackage[top=1.25in,left=1.25in,right=1.25in]{geometry}
\newlength{\numone}
\setlength{\numone}{0.19\textwidth}
\newlength{\widone}
\setlength{\widone}{2.25cm}
\newlength{\numtwo}
\setlength{\numtwo}{0.32\textwidth}
\newlength{\widtwo}
\setlength{\widtwo}{4.5cm}

\newcounter{countp}

\newtheorem{thm}{Theorem}

\newtheorem{lemma}[thm]{Lemma}

\theoremstyle{definition}
\newtheorem{defin}[thm]{Definition}

\newtheorem{rem}[thm]{Remark}

\numberwithin{equation}{section}

\author{\Large{Riccardo W. Maffucci}}
\newcommand{\Addresses}{{
		\footnotesize
		
		R.W.~Maffucci, \textsc{EPFL MA SB,
			Lausanne, Switzerland 1015}\par\nopagebreak\vspace{-0.35cm}
		\textit{E-mail address}, R.W.~Maffucci: \texttt{riccardo.maffucci@epfl.ch}}}

\title{\Large{\uppercase{\bf On unigraphic $3$-polytopes of radius one}}}

\date{}


\def\calC{\mathcal{C}}

\def\calS{\mathcal{S}}

\newcommand{\C}{\mathbb{C}}

\begin{document}
\titleformat{\section}
  {\Large\scshape}{\thesection}{1em}{}
\titleformat{\subsection}
  {\large\scshape}{\thesubsection}{1em}{}
\maketitle


\begin{abstract}
We ask which degree sequences admit a unique realisation as a $3$-polytopal graph (polyhedron) on $p$ vertices. We give an exhaustive list of these sequences for the case where one degree equals $p-1$ and exactly two or three of them equal $3$. We also find all $3$-polytopes of radius one with $p\leq 17$, and those with $q\leq 41$ edges, by developing a fast algorithm and making use of High Performance Computing.
\end{abstract}
{\bf Keywords:} Unigraphic, Unique realisation, Degree sequence, Valency, Planar graph, Graph radius, $3$-polytope, Enumeration, Graph algorithm.
\\
{\bf MSC(2010):} 05C07, 05C75, 05C62, 05C10, 52B05, 52B10, 05C30, 05C85.


\section{Introduction}
\subsection{Results}
The distance $d(u,v)$ between two vertices $u,v$ of a (finite, simple) connected graph $F=(V,E)$ is the length of a minimal $uv$-path. The graph radius of $F$ is defined as
\[\text{rad}(F):=\min\{v\in V : \text{ecc}(v)\},\]
where
\[\text{ecc}(v):=\max\{w\in V : d(v,w)\}\]
is the eccentricity of $v$. In this paper we will refer to the graph radius simply as the radius.

The planar, $3$-connected graphs are the $1$-skeletons of $3$-polytopes \cite{radste}. In the literature these graphs are in fact called $3$-polytopes, or sometimes polyhedra. They are the planar graphs that are \textit{uniquely} embeddable in a sphere \cite{whit32}. Their regions are bounded by cycles (polygons), and this fact is true more generally for planar, $2$-connected graphs \cite[Proposition 4.26]{dieste}. For recent work on distance, radius, and related topics for graphs, see e.g. \cite{mukw12,aknisi,liwe13}. For distance topics in polytopes, see e.g. \cite{kkmnns,sant12}.

According to Tutte's Theorem \cite{tutt61}, if $F$ is a $3$-polytope of size (i.e. number of edges) $q$ that is not a pyramid (i.e. a wheel graph), then either $F$ or its dual may be obtained by adding an edge to a $3$-polytope of size $q-1$. This yields an algorithm to generate all $3$-polytopes. For the sub-family of $3$-polytopes of graph radius $1$, there is a faster algorithm to generate them all.

\begin{rem}
	\label{rem:1}
Every $3$-polytope of graph radius $1$ and size $q$ that is not a pyramid may be obtained by adding an edge to a $3$-polytope of radius $1$ and size $q-1$. To see this, observe that a graph $F$ is $3$-polytopal of radius $1$ if and only if $F-v$ is planar and has a region containing all remaining vertices, where $v$ is a vertex of eccentricity $1$ in $F$. If $F$ is not a pyramid, then there is an edge $e$ such that $F-v-e$ still has a region containing all remaining vertices. The author wishes to thank Lionel Br\"{u}tsch and Niels Willems for helping to make this point explicit.
\end{rem}

The degree of a vertex is the number of edges incident to it (vertices adjacent to it). Letting $V=\{v_1,v_2,\dots,v_p\}$ and $d_i=\deg(v_i)$ for $i=1,2,\dots,p$, we call
\begin{equation}
\label{eqn:s}
s: d_1,d_2,\dots,d_p
\end{equation}
the degree sequence of $F$. Vice versa, a sequence $s$ \eqref{eqn:s} is called graphic (sometimes `graphical') if there exists an order (i.e. number of vertices) $p$ graph $F$ of degree sequence $s$. We then say that $F$ is a realisation of $s$. The classical theorems of Havel \cite{have55} and Hakimi \cite{hakimi} and Erdös-Gallai \cite{erdgal} determine when $s$ is graphic. With the Havel--Hakimi algorithm \cite{have55,hakimi}, one can construct such an $F$ realising $s$. For recent work on degree sequences of graphs, see e.g. \cite{barr22,mukw12,liwe13}. For recent results on degree sequences of $3$-polytopes, see e.g. \cite{mafpo2,mafpo3}.

A sequence $s$ is called \textit{unigraphic} if, up to isomorphism, there is exactly one graph of degree sequence $s$. With some abuse of terminology, we will refer to the corresponding realisation $F$ as unigraphic. Koren \cite{koren1} and Li \cite{li1975} devised criteria to establish when a given $s$ is unigraphic. Unigraphic non-$2$-connected graphs have been classified \cite{john80}, and also those satisfying $d_2=d_{p-1}$ \cite[Theorem 6.1]{koren1}.

The problem might be more treatable if we ask which $s$ are uniquely realisable by a graph $F$ belonging to a given sub-class of graphs, i.e., satisfying certain properties. For instance, it is easy to explicitly list the types of unigraphic sequences of tree graphs \cite[Exercise 6.11]{harary}. Then it may or may not be the case that $s$ is unigraphic w.r.t. the class of all graphs, e.g. $2,2,2,1,1$ is unigraphic \textit{w.r.t. the sub-class of trees}, but not unigraphic w.r.t. the class of all graphs.

In this paper, we ask what are the unigraphic $3$-polytopal sequences, and attempt to answer this question for the class of $3$-polytopes of radius $1$. Plainly $\text{rad}(F)=1$ means that there exists a vertex, $v_1$ say, adjacent to all others, i.e. $d_1=p-1$. In what follows, we will denote by $a$ the number of degree $3$ vertices in $F$,
\begin{equation}
\label{eqn:a}
a=a(F):=\#\{v\in V : \deg(v)=3\}
\end{equation}
(this is the smallest possible degree, as $F$ is $3$-connected). It is not difficult to show that $a\geq 2$ (see Lemma \ref{le:SG} below). Our first result is a precise characterisation of the unigraphic, $3$-polytopal, radius $1$ sequences with $a=2$. The notation $d^k$ in a sequence stands for $k$ vertices of degree $d$.

\begin{thm}
	\label{prop:1}
	Let $s$ be a sequence as in \eqref{eqn:s}, with exactly $2$ vertices of degree three. Then $s$ is unigraphic as a $3$-polytope of radius $1$ if and only if $s$ is one of the following:
	\begin{align*}
	A1&: p-1, 4^{p-3}, 3^2, & & p\geq 5, \ p \text{ odd};
	\\A2&: p-1, p-3, 4^{p-4}, 3^2, & & p\geq 8;
	\\A3&: p-1,(x+3)^{(p-5)/(x-1)},4^{(p-5)(x-2)/(x-1)+2},3^2, & & p\geq 6, \ (p-5)/(x-1)\in\mathbb{N};
	\\A4&: p-1, x+3, p-x, 4^{p-5}, 3^2, & & p\geq 8, \ \lfloor(p-1)/2\rfloor\leq x\leq p-5.
	\end{align*}
\end{thm}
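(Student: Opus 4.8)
The plan is to reformulate the problem in terms of outerplanar graphs and then to classify the relevant ones by a structural analysis of their bounded faces.

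First I would make explicit the reduction already visible in Remark~\ref{rem:1}: if $\operatorname{rad}(F)=1$ then $F$ has a vertex $v$ with $\deg v=p-1$, and since $F$ is $3$-connected and planar while $v$ is adjacent to every other vertex, $G:=F-v$ is a $2$-connected outerplanar graph with all vertices on the outer face; conversely, adding back a vertex adjacent to all of $V(G)$ recovers $F$ from any such $G$. Under this correspondence the degree sequence $s$ of $F$ is obtained from the degree sequence $s'$ of $G$ by appending $p-1$ and raising every other entry by $1$, and $F$ has exactly $a=2$ vertices of degree $3$ iff $G$ has exactly two vertices of degree $2$. After dealing separately with the handful of small sequences realisable by a polytope with two or more vertices of degree $p-1$, the theorem becomes: determine the sequences $s'$ with exactly two $2$'s that are uniquely realisable inside the class of $2$-connected outerplanar graphs.

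Next I would pin down the shape of such a $G$. Viewing $G$ as its outer Hamiltonian cycle $H$ together with a family of pairwise non-crossing chords, a vertex has degree $2$ precisely when it lies on no chord, and a short count shows that the number of such vertices equals $|V(G)|-\sum_R r(R)$, the sum being over the bounded faces $R$ and $r(R)$ being the number of maximal runs of $H$-edges on the boundary of $R$. Letting $T$ be the tree whose nodes are the bounded faces, two faces being adjacent when they share a chord, one checks that every leaf of $T$ forces a degree-$2$ vertex; hence $T$ is a path $R_1-R_2-\cdots-R_k$, the end faces $R_1,R_k$ are triangles, and each internal $R_i$ is either a quadrilateral whose two shared chords are opposite sides or a triangle whose two shared chords meet at a vertex. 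In other words $G$ is a \emph{chain} built by a sequence of triangle- and quadrilateral-attachments; maximal blocks of triangles hinged at a common vertex are fans, and the degree sequence of $G$ is entirely governed by the multiset of fan-block sizes together with the number and positions of the quadrilateral links. With this dictionary, the ``if'' direction is a direct check: translating $A1$--$A4$ to the corresponding $s'$ (all-$3$ with an even number of vertices for $A1$; one large entry for $A2$; several equal large entries for $A3$; two distinct large entries for $A4$), the maximum degree, its multiplicity and the edge count leave no freedom in the fan sizes or in where the quadrilaterals sit, so $G$, and hence $F$, is forced.

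The ``only if'' direction is the substantial part and the place I expect the main difficulty. Given any chain $G$ whose degree sequence $s'$ is \emph{not} of one of these four shapes, I would exhibit a second, non-isomorphic realisation by a degree-preserving local surgery on the chain --- swapping two adjacent fan-blocks of unequal size, sliding a quadrilateral link past a neighbouring triangle, or re-hinging a fan-block at the other endpoint of its anchoring chord --- and then prove that at least one such move is always available unless $s'$ is exactly one of $A1$--$A4$. Making this last step precise is the crux: it amounts to an exact accounting of the ``degrees of freedom'' of a chain (how the sorted degree sequence constrains the number of high-degree vertices, the sizes of the fan-blocks, and the count and placement of the quadrilaterals) together with a verification that, outside the four listed families, some block can genuinely be rearranged without altering the multiset of degrees. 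A secondary but unavoidable chore is the careful handling of the boundary values of the parameters $x$ and $p$, where two of the families, or a family and a degenerate configuration, can coincide or collide; this is what forces the precise inequalities appearing in the statement.
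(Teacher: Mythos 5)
Your reduction and structural analysis are sound and essentially equivalent to the paper's: working with the $2$-connected outerplanar graph $F-v$ and showing that its weak dual is a path whose end faces are triangles and whose internal faces are triangles (chords sharing a vertex) or quadrilaterals (opposite chords) carries the same information as Lemma \ref{le:forest}, which instead deletes the outer Hamiltonian cycle $H$ and shows that the chord graph $G=F-v_1-E(H)$ is a forest of caterpillars with exactly two isolated vertices; your count of degree-$2$ vertices via runs of $H$-edges is correct. One inaccuracy in the setup: the sequences admitting a realisation with two vertices of degree $p-1$ are not a ``handful of small sequences'' but the infinite family $p-1,p-1,4^{p-4},3^2$ (the cone over a fan); this is harmless, since it is A3 with one central vertex and both apexes yield isomorphic outerplanar graphs, but it needs to be said rather than waved at, because it is exactly the point where ``unigraphic as a radius-$1$ polytope'' and ``uniquely realisable as a $2$-connected outerplanar graph'' could in principle diverge.

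The genuine gap is that the ``only if'' direction --- the actual content of the theorem --- is not carried out. Announcing that you would ``exhibit a degree-preserving local surgery \dots and then prove that at least one such move is always available unless $s'$ is exactly one of A1--A4'' restates what must be proved; nothing in the proposal derives the four families, shows that no fifth family survives, or produces the boundary constraints (e.g.\ why at most one component other than copies of $K_2$, why that component must be a star and why there can be only two non-trivial components in A2, why $\ell=2$ is the sole exception creating A4, whence $\lfloor(p-1)/2\rfloor\le x\le p-5$). The paper's proof at precisely this point is a concrete case analysis on the number of non-trivial components of the chord forest, with explicit swaps: $\mathcal{C}(x,y)\cup K_2$ versus $\mathcal{S}_x\cup\mathcal{S}_y$ (Figure \ref{pic:002}), the global reordering of vertices around $H$ that kills a star plus two or more copies of $K_2$ (Figure \ref{pic:004}), and the exchange of $x_1$ with $x_j$ in a caterpillar except when $\ell=2$. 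Some of these moves are not of the three local types you list, and it is never argued that your moves suffice. Symmetrically, the ``if'' half is not a mere reading of the degree data: one must also check that the cyclic arrangement of the chords around the outer cycle is forced up to isomorphism (cf.\ Remark \ref{rem:cat}), not only the fan sizes and the number of quadrilateral links. As it stands, the proposal builds the right framework and then stops exactly where the theorem begins.
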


Theorem \ref{prop:1} will be proven in section \ref{sec:2}. With similar methods but an increasing amount of work, one can go about proving a statement of this flavour for the cases $a=3$, $a=4$, $\dots$. Here we prove the analogue of Theorem \ref{prop:1} for $a=3$, using similar methods. Formulating a general statement for any feasible $a$ seems to be difficult. A result in this direction will appear elsewhere \cite{mprep00}.

\begin{thm}
	\label{prop:2}
	Let $s$ be a sequence as in \eqref{eqn:s}, with exactly $3$ vertices of degree three. Then $s$ is unigraphic as a $3$-polytope of radius $1$ if and only if $s$ is one of the following:
	\begin{align*}
	\\B1&: p-1, (x+3)^2, p-1-2x+3, 4^{p-7}, 3^3, & & p\geq 10, \ 3\leq x\leq\lfloor(p-4)/2\rfloor;
	\\C1&: p-1, 6, 5^{p-6}, 4, 3^3, & & p\geq 8;
	\\D1&: p-1, \left(\frac{p}{4}+3\right)^4, 4^{p-8}, 3^3, & & p\geq 12, \ p\equiv 0\hspace{-0.25cm}\pmod 4,
	\end{align*}
	or one of the exceptional $5,4^2,3^3$ and $6,5^3,3^3$.
\end{thm}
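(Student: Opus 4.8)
The plan is to mirror the strategy already used for Theorem~\ref{prop:1}, but keeping track of the richer combinatorial structure forced by $a=3$. Write $s: p-1, d_2, \dots, d_{p-1}, 3^3$ where $v_1$ is the apex (degree $p-1$), and note that deleting $v_1$ leaves a planar, $2$-connected graph $F' = F - v_1$ on $p-1$ vertices all lying on one face (an \emph{outerplanar} $2$-connected graph), whose inner structure is a triangulation-like object. In $F'$, a vertex of degree $d_i$ in $F$ has degree $d_i - 1$; in particular the three degree-$3$ vertices of $F$ become degree-$2$ vertices of $F'$, i.e.\ they are ``ears'' subdividing edges of the outer polygon and have no chords. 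First I would set up the Euler/handshake bookkeeping: since $F$ is $3$-polytopal with $\mathrm{rad}=1$, the number of edges is $q = (p-1) + |E(F')|$, and $F'$ being a $2$-connected outerplanar graph with all $p-1$ vertices on the outer cycle has $|E(F')| = (p-1) + c$ where $c$ is the number of chords; each chord of $F'$ together with $v_1$ bounds triangular faces, and the faces of $F$ are triangles and possibly one or two larger faces — this pins down $\sum (d_i - 3)$ and hence the shape of the ``interior'' degree sequence $d_2, \dots, d_{p-1}$ in terms of a small number of parameters.

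Next I would carry out the feasibility analysis: determine exactly which sequences $s$ with $a=3$ are realisable \emph{at all} as a $3$-polytope of radius~$1$, by classifying the possible chord structures of $F'$. The key geometric fact is that the three degree-$2$ vertices of $F'$ split the outer $(p-1)$-cycle into three arcs, and the chords triangulate the three ``pockets'' so formed; the degrees $d_i$ of the interior vertices are governed by how many chords meet each, so the realisable $s$ fall into a handful of families according to whether the chords form a ``fan'' at one vertex (giving the $C1$ family $p-1, 6, 5^{p-6}, 4, 3^3$), a ``double fan'' / symmetric split (the $B1$ family $p-1, (x+3)^2, p-4-2x, 4^{p-7}, 3^3$), or a balanced $4$-fold configuration (the $D1$ family, only possible when $4 \mid p$), plus the two sporadic small cases. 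Then for each feasible $s$ I would prove uniqueness or exhibit two non-isomorphic realisations: the ``only if'' direction amounts to showing that for every feasible $s$ \emph{not} on the list there are at least two realisations (by a local switching/rerouting argument on the chords, exactly as in the $a=2$ case), and the ``if'' direction amounts to showing that for $B1$, $C1$, $D1$ and the two exceptions the chord configuration is forced up to the unique planar embedding (using Whitney's theorem, cited in the introduction, that a $3$-polytope has a unique embedding).

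The main obstacle I expect is the ``only if'' direction — proving that nothing is missing from the list. Unlike $a=2$, with three degree-$3$ vertices there are more ways to distribute the chords among the three pockets, and more candidate interior degree sequences to rule out; the argument requires a careful case split on the multiset of pocket sizes together with a switching lemma showing that whenever a pocket admits two different triangulations giving the same local degree sequence, or whenever chords can be ``slid'' between adjacent pockets without changing $s$, one gets a second realisation. Handling the boundary cases where a pocket is degenerate (two of the degree-$3$ vertices adjacent on the outer cycle, or a pocket with no interior vertices) is where the sporadic examples $5,4^2,3^3$ and $6,5^3,3^3$ and the divisibility constraint $p \equiv 0 \pmod 4$ in $D1$ emerge, so those small-$p$ configurations must be checked by hand. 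The remaining steps — the Euler bookkeeping and the explicit two-realisation constructions for the non-unigraphic sequences — are routine but lengthy, and I would organise them as a sequence of lemmas paralleling section~\ref{sec:2}.
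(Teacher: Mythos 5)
Your setup starts the same way as the paper (delete the apex $v_1$, view $F-v_1$ as a Hamiltonian outer cycle $H$ plus ``chords''), but the structural claims you then build on are wrong, and they would derail the classification. First, the chords do \emph{not} triangulate anything: writing $G=F-v_1-E(H)$, the counting argument the paper uses (Lemma \ref{le:SG}) shows that every cyclic block of $G$ bounded by an $i$-cycle forces $i-2$ extra degree-$3$ vertices, so with $a=3$ the chord graph has \emph{at most one} cycle, necessarily a triangle; generically $G$ is a sparse forest-plus-triangle, and the internal faces of $F-v_1$ (hence of $F$) are typically many and large, not ``triangles plus one or two larger faces''. Second, your picture of the three degree-$3$ vertices cutting $H$ into three arcs whose ``pockets'' contain the chords is backwards: the chord-free vertices are exactly what must be \emph{interleaved} with the chord structure along $H$ to keep $a=3$ (this is how the paper produces degree-$0$ vertices of $G$ in Lemma \ref{le:SG}), so chords routinely join vertices on different arcs, and the families B1, C1, D1 are not ``fans''/``double fans'' of pocket-triangulating chords at all --- e.g.\ for C1 the non-trivial part of $G$ is a triangle with a path attached, and for D1 a triangle plus a fourth attached vertex with pendants equalising the four degrees.

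Beyond the incorrect geometric model, the proposal omits the actual content of the ``only if'' direction and instead presupposes the answer: you assert that the feasible sequences ``fall into'' the three families plus sporadic cases, but the paper has to \emph{prove} (i) that for $p\ge 7$ unigraphicity forces $G$ to contain a triangle and to have a connected non-trivial part (Lemma \ref{le:d3}, via relocating an isolated vertex of $G$ among the three arcs, and a caterpillar rearrangement), and (ii) a case analysis of the caterpillars attached to the triangle's vertices (one, two, or three non-trivial ones; star versus path attachment; the degree-equality conditions that give B1 and D1), each non-listed configuration being killed by an explicit second realisation. None of these switching arguments is supplied or even correctly located in your framework, and your ``switching lemma'' for pocket triangulations has no counterpart in the true structure. (Minor: you also transcribe B1's middle degree as $p-4-2x$; it should be $p-1-2x+3=p+2-2x$.) As written, the proposal would not close the ``only if'' direction, and its feasibility analysis rests on a false description of the realisations.
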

Theorem \ref{prop:2} will be proven in section \ref{sec:3}.

To describe the $3$-polytopes corresponding to these sequences, we need some extra notation. For $F$ a $3$-polytope of graph radius one satisfying $a=2$ or $a=3$, $F-v_1$ is a non-empty, planar, Hamiltonian graph. It also has a region containing all vertices (except $v_1$ of course).
\begin{defin}
	\label{def:G}
	In what follows, $H$ will denote a Hamiltonian cycle in $F-v_1$, and $E(H)$ its edge set. If $a=2$ or $a=3$, clearly $F$ is not a pyramid, and we define $G:=F-v_1-E(H)$, a non-empty planar graph, of sequence
	\begin{equation*}
	s': d_2-3, d_3-3, \dots, d_{p}-3.
	\end{equation*}
\end{defin}
Note that the unigraphicity of $s$ as a $3$-polytope of radius $1$ does not imply that of $s'$ among planar graphs, and viceversa. The notation $H,G,s'$ will always be as in Definition \ref{def:G}.

In all of the types A1, A2, A3, and A4, $G$ is a forest, and either it has at most two non-trivial trees, or all non-trivial trees are copies of $K_2$ (type A1). The types B1, C1, and D1 have in common that $G$ has only one non-trivial component, and exactly one cycle, that is a triangle. For B1, $G$ is formed (apart from isolated vertices) of a triangle together with vertices of degree one adjacent to the boundary points of the triangle. At least two of these points have the same degree in $G$. For C1, the only non-trivial component of $G$ is a triangle together with a path starting from one of its vertices (see e.g. Figures \ref{pic:011} and \ref{pic:012}). For D1, the non-trivial component of $G$ is a triangle together with a fourth vertex adjacent to exactly one point on the boundary of the triangle, and with extra degree one vertices adjacent each to one of these four vertices, so that these four have the same degree in $G$ (see e.g. Figure \ref{pic:022}).

Moreover, we have gathered the data of Tables \ref{tab:1}, \ref{tab:2}, and \ref{tab:3} by implementing the simplified faster version of Tutte's algorithm for the case of radius one (Remark \ref{rem:1}), with the help of Scientific IT \& Application Support
(SCITAS) High Performance Computing (HPC) for the EPFL community. In particular, we found all $3$-polytopes of radius $1$ with $p\leq 17$, and also those with $q\leq 41$, and among these the unigraphic ones. The unigraphic $3$-polytopes found with this algorithm are consistent with the results of Theorems \ref{prop:1} and \ref{prop:2}.

\begin{table}[h!]
	\centering
	$\begin{array}{|c|c|c|c|c|c|c|c|}
	\hline
	\text{edges}&3\text{-polytopes}&3\text{-polytopes of radius }1&\text{rad. }1\text{ sequences}&\text{unigraphic rad. }1\text{ seq.}\\
	\hline\hline
	6&1&1&1&1\\\hline
	7&0&0&0&0\\\hline
	8&1&1&1&1\\\hline
	9&2&1&1&1\\\hline
	10&2&1&1&1\\\hline
	11&4&1&1&1\\\hline
	12&12&2&2&2\\\hline
	13&22&2&1&0\\\hline
	14&58&4&3&2\\\hline
	15&158&5&4&3\\\hline
	16&448&7&3&1\\\hline
	17&1342&10&5&2\\\hline
	18&4199&16&7&5\\\hline
	19&13384&27&6&1\\\hline
	20&43708&42&10&3\\\hline
	21&144810&67&15&6\\\hline
	22&485704&116&11&2\\\hline
	23&1645576&187&18&2\\\hline
	24&5623571&329&28&11\\\hline
	25&19358410&570&21&1\\\hline
	26&67078828&970&35&4\\\hline
	27&?&1723&52&12\\\hline
	28&&3021&38&1\\\hline
	29&&5338&61&3\\\hline
	30&&9563&90&15\\\hline
	31&&16981&67&1\\\hline
	32&&30517&103&3\\\hline
	33&&54913&158&18\\\hline
	34&&98847&112&2\\\hline
	35&&179119&178&3\\\hline
	36&&324333&258&20\\\hline
	37&&589059&191&1\\\hline
	38&&1072997&287&3\\\hline
	39&&1955207&425&24\\\hline
	40&&3573129&?&1\\\hline
	41&&6538088&?&1\\\hline
	\end{array}$
	\caption{Number of $3$-polytopes or radius $1$, degree sequences and unigraphic sequences up to $41$ edges. For the total number of $3$-polytopes on $q$ edges see e.g. Dillencourt \cite{dillen}.}
	\label{tab:1}
\end{table}

\begin{table}[h!]
	\footnotesize
	\centering
	$\begin{array}{|c||c|c|c|c|c|c|c|c|c|c|c|c|c|c|c|c|c|c|}
	\hline
	\hbox{\diagbox{$q$}{$p$}}&4&5&6&7&8&9&10&11&12&13&14\\
	\hline\hline
	6&1(1)\\\hline
	7&\\\hline
	8&&1(1)\\\hline
	9&&1(1)\\\hline
	10&&&1(1)\\\hline
	11&&&1(1)\\\hline
	12&&&1(1)&1(1)\\\hline
	13&&&&2&0\\\hline
	14&&&&3(1)&1(1)\\\hline
	15&&&&3(3)&2\\\hline
	16&&&&&6&1(1)\\\hline
	17&&&&&7(2)&3\\\hline
	18&&&&&4(4)&11&1(1)\\\hline
	19&&&&&&24(1)&3\\\hline
	20&&&&&&24(2)&17&1(1)\\\hline
	21&&&&&&12(6)&51&4\\\hline
	22&&&&&&&89(1)&26&1(1)\\\hline
	23&&&&&&&74(2)&109&4\\\hline
	24&&&&&&&27(9)&265(1)&36&1(1)\\\hline
	25&&&&&&&&371(1)&194&5\\\hline
	26&&&&&&&&259(3)&660&50&1(1)\\\hline
	27&&&&&&&&82(11)&1291(1)&345&5\\\hline
	28&&&&&&&&&1478&1477&65\\\hline
	29&&&&&&&&&891(2)&3891(1)&550\\\hline
	30&&&&&&&&&228(14)&6249&3000\\\hline
	31&&&&&&&&&&6044(1)&10061\\\hline
	32&&&&&&&&&&3176(2)&21524\\\hline
	33&&&&&&&&&&733(18)&29133\\\hline
	34&&&&&&&&&&&24302\\\hline
	35&&&&&&&&&&&11326(3)\\\hline
	36&&&&&&&&&&&2282(19)\\\hline
	\end{array}$
	\caption{Number of $3$-polytopes or radius $1$, sorted by order $p$ and size $q$ (continues in Table \ref{tab:3}). Numbers in brackets indicate unigraphic $3$-polytopes.}
	\label{tab:2}
\end{table}

\begin{table}[h!]
	\footnotesize
	\centering
	$\begin{array}{|c||c|c|c|c|c|c|c|c|c|c|c|c|c|c|c|c|c|}
	\hline
	\hbox{\diagbox{$q$}{$p$}}&15&16&17&18&19&20&21\\
	\hline\hline
	28&1(1)\\\hline
	29&6\\\hline
	30&85&1(1)\\\hline
	31&870&6\\\hline
	32&5710&106&1(1)\\\hline
	33&23747&1293&7\\\hline
	34&64183(1)&10228&133&1(1)\\\hline
	35&114541&51349&1896&7\\\hline
	36&133464&170904&17521&161&1(1)\\\hline
	37&98000(1)&384035&104349&2667&8\\\hline
	38&40942(2)&586696&416385&28777&196&1(1)\\\hline
	39&7528(23)&599516&1144304(1)&200137&3714&8\\\hline
	40&&392528&2192206&942417&45745&232&1(1)\\\hline
	41&&148646(1)&2923018&3094421&366982&5012&9\\\hline
	42&&24834(23)&2656742&\dots&\dots&\dots&\dots\\\hline
	43&&&1570490&\dots&\dots&\dots&\dots\\\hline
	44&&&543515(2)&\dots&\dots&\dots&\dots\\\hline
	45&&&83898(28)&\dots&\dots&\dots&\dots\\\hline
	\end{array}$
	\caption{Number of $3$-polytopes or radius $1$, sorted by order $p$ and size $q$ (continued from Table \ref{tab:2}). Numbers in brackets indicate unigraphic $3$-polytopes.}
	\label{tab:3}
\end{table}

\clearpage
\paragraph{Plan of the rest of the paper.}
We end the introduction with the general setup and initial considerations. In section \ref{sec:2} we will prove Theorem \ref{prop:1}, i.e. the case $a=2$, and in section \ref{sec:3} we will prove Theorem \ref{prop:2}, i.e. the case $a=3$.




\subsection{Initial considerations}


As mentioned in the Introduction, $a\geq 2$ (recall \eqref{eqn:a}), and actually we can show rather more.
\begin{lemma}
	\label{le:SG}
	If $F$ is a radius one $3$-polytope, then
	\begin{equation}
	\label{eqn:d1}
	a\geq 2+\sum_{i\geq 3}(i-2)\cdot B_G(i),
	\end{equation}
	where
	\begin{equation*}
	B_G(i):=\#\{\text{blocks of } G\text{ bounded by an } i \text{-cycle}\}.
	\end{equation*}
\end{lemma}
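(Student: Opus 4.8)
The plan is to read the inequality off a fixed plane embedding of $F$. Since $\mathrm{rad}(F)=1$, fix a vertex $v_1$ adjacent to all of $v_2,\dots,v_p$; as $F$ is $3$-connected, $F-v_1$ is $2$-connected and plane, so each of its faces is bounded by a cycle, and the face of $F-v_1$ containing $v_1$ has all $p-1$ remaining vertices on its boundary, hence that boundary is a Hamiltonian cycle $H$ of $F-v_1$. Take this $H$ in Definition~\ref{def:G} and set $\Gamma:=G\cup H=F-v_1$; then $G$ is drawn inside the disk bounded by $H$, all $p-1$ vertices lie on the boundary in $H$-order, and no edge of $G$ joins two $H$-consecutive vertices (else $F$ would have a multiple edge), so $G$ is outerplanar. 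A vertex $v_i$ ($i\ge 2$) has degree $3$ in $F$ exactly when it is isolated in $G$, so $a$ is the number of isolated vertices of $G$ (if $G$ is edgeless the inequality reads $a\ge 2$ and holds because then $a\ge p-1\ge 3$; so assume $G$ has an edge). Joining $v_1$ to the boundary of its face in the cyclic order of $H$ triangulates that face, so the faces of $F$ are exactly the $p-1$ triangles $v_1v_jv_{j+1}$ together with the bounded faces of $\Gamma$.

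Next I would derive an exact formula for $a$. As $\Gamma$ contains the spanning cycle $H$, it is $2$-connected, so each bounded face $\phi$ of $\Gamma$ is bounded by a cycle in which maximal arcs of $H$-edges alternate with maximal subpaths of $G$-edges; let $k_\phi$ be the number of those $H$-arcs (so $k_\phi=0$ precisely when $\partial\phi$ is a cycle of $G$). A vertex isolated in $G$ has its two $H$-edges on the boundary of a single bounded face and is an interior vertex of a unique $H$-arc there; conversely every interior vertex of an $H$-arc carries no incident $G$-edge and so is isolated in $G$. Counting such vertices over all bounded faces, and using that each of the $p-1$ edges of $H$ lies on exactly one bounded face, gives
\[
a\;=\;(p-1)-\sum_{\phi}k_\phi .
\]
A routine Euler count on $\Gamma$ (which has $|E(G)|+1$ bounded faces of sizes summing to $2|E(G)|+(p-1)$) recasts this as $a=3-\kappa+\beta(G)+\mathrm{Exc}$, where $\kappa$ is the number of nontrivial components of $G$, $\beta(G)$ its cyclomatic number, and $\mathrm{Exc}:=\sum_\phi(|\partial\phi|-3)\ge 0$ is the total excess of the bounded faces over triangles. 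Thus the lemma is equivalent to the inequality
\[
\mathrm{Exc}+\beta(G)\;\ge\;(\kappa-1)+\sum_i(i-2)\,B_G(i),
\]
which I will call $(\star)$.

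For $(\star)$ I would use the outerplanar position of $G$. The decisive geometric fact is that the vertices of any cycle of $G$ occur in that cyclic order along $H$ (a cycle nested inside $H$ inherits the outer order) and consecutive vertices of such a cycle are non-$H$-adjacent, so the $H$-arc between them always has an interior vertex. Splitting $\mathrm{Exc}$ along the laminar family of disks bounded by the outer cycles of the non-bridge blocks of $G$, one finds that for a block $B$ with outer cycle of length $i$ the faces drawn inside that cycle have total excess at least $i-2-\beta(B)$ (using $|V(B)|\ge i$); since $\sum_B\beta(B)=\beta(G)$, summing cancels the $-\beta(B)$ and produces exactly the term $\sum_i(i-2)B_G(i)$ of $(\star)$. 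It then remains to show that the bounded faces of $\Gamma$ lying outside every block-disk have total excess at least $\kappa-1$ — in words, that each extra nontrivial component of $G$ forces a face of $\Gamma$ of size at least $4$ in the region between $H$ and $G$. I would prove this by induction on the block--cut structure of $G$: delete a leaf nontrivial component (or a leaf block), which decreases $\kappa$ (or the relevant counts) by one, and charge the lost excess to the non-degenerate $H$-arc it cuts off — non-degenerate by the geometric fact in the cyclic case, and because a bridge of $G$ likewise joins non-$H$-consecutive vertices in the bridge case.

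The step I expect to be the main obstacle is this final induction. The identity $a=(p-1)-\sum_\phi k_\phi$ and the block-by-block splitting of $\mathrm{Exc}$ are mechanical, but the delicate point is that components or blocks of $G$ may be nested inside the $H$-arcs cut off by other components, so locally ``the arc between consecutive cycle-vertices contains a vertex isolated in $G$'' can fail; one must choose which leaf to remove so that the simultaneous changes in $\mathrm{Exc}$, $\kappa$ and $\beta(G)$ balance out. I expect this bookkeeping, rather than any single new idea, to be the technical heart of the argument.
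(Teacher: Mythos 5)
Your setup and reduction are correct, and they follow a genuinely different route from the paper: you turn the lemma into an exact Euler-type identity $a=3-\kappa+\beta(G)+\mathrm{Exc}$ (which does check out: $\mathrm{Exc}=p-4-|E(G)|$ and $|E(G)|=(p-1-a)-\kappa+\beta(G)$), and your block-by-block computation is in fact an equality, since a block $B$ bounded by an $i$-cycle has all its vertices on $H$, so the faces of $\Gamma$ inside that cycle are exactly the $\beta(B)$ bounded faces of $B$ and their total excess is $2i-3-|E(B)|=i-2-\beta(B)$; summing over blocks correctly reduces the lemma to the claim that the bounded faces of $\Gamma$ outside all block-disks have total excess at least $\kappa-1$. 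The paper instead argues directly on $H$: it first produces an isolated vertex of $G$ on each side of any chord, and then, choosing an ``innermost'' cyclic block whose vertices all lie on one $H$-path and peeling blocks off one at a time, it finds an isolated vertex between consecutive attachment vertices of that block, giving $i-2$ new isolated vertices per block.

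The genuine gap is precisely your last step. The inequality $\mathrm{Exc}_{\mathrm{outside}}\ge \kappa-1$ is where all the geometric content of the lemma sits (it is the counterpart of the paper's ``each $H$-arc between consecutive attachment points contains a vertex isolated in $G$'' argument), and you do not prove it: you offer a leaf-removal induction on the block--cut structure and then yourself point out that nesting of components inside $H$-arcs can break the local charging, leaving the ``bookkeeping'' unresolved. As written, deleting a leaf component does not simply delete one face's excess --- faces merge and $\mathrm{Exc}$, $\kappa$, $\beta$ all move simultaneously --- so the induction step is not established. The claim is true and can be closed without induction, for instance as follows: for a bounded face $\phi$ outside the block-disks whose boundary meets $m_\phi\ge 2$ distinct nontrivial components, the boundary alternates at least $m_\phi$ maximal $G$-paths with at least $m_\phi$ $H$-arcs, so $|\partial\phi|\ge 2m_\phi$ and its excess is at least $m_\phi-1$; and the hypergraph on the $\kappa$ nontrivial components whose hyperedges are the component-sets met by such faces is connected, because any path in the disk avoiding the (pairwise disjoint) block-disk interiors crosses only $G$-edges when passing between bounded faces (every $H$-edge borders the outer face), whence $\sum_\phi(m_\phi-1)\ge\kappa-1$. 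Some argument of this kind (or the paper's innermost-block argument) is needed; without it your proposal is a correct reformulation plus an unproven key claim, not a complete proof.
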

\begin{proof}
We begin by showing that $a\geq 2$. As the graph $G$ is non-empty, we may take $e_1=uv$ to be any edge. The claim is, at least one vertex on each of the two $uv$-paths of the cycle $H$ has degree $0$ in $G$. To see this, fix any $w\neq u,v$ on one of the $uv$-paths ($w$ exists as $e_1=uv$ is an edge in $G$ so it cannot be an edge in $H$). Either $\deg_{G}(w)=0$, or there is an edge $e_2=wx\in E(G)$. Now $G$ is planar so that $e_1,e_2$ cannot cross. We then consider the $wx$-subpath of the initial $uv$-path, and reason as above to conclude that since $G$ is finite, we indeed have $a\geq 2$.
	
Now let $G$ contain exactly one block, bounded by an $i$-gon, say. This determines $i$ (internally disjoint) $u_1u_2$-, $u_2u_3$-, $\dots$, $u_{i}u_1$- paths in $H$, hence reasoning as above $a\geq i= 2+(i-2)$ and \eqref{eqn:d1} is proven in the case of one block.

Next, call $J$ the subgraph of $G$ induced by vertices lying on cyclic blocks of $G$. We claim that there exists at least one block $B$ of $J$ with vertices all lying on the same path $u_1,u_2,\dots,u_n$ of $H$, such that $u_2,\dots,u_{n-1}$ belong to no other cyclic block of $J$. We start by checking if a block $B_1$ satisfies this property. If not, we can find an edge $e=w_1w_m$ on the boundary of $B_1$, such that $w_1,w_2,\dots,w_m$ are consecutive on $H$, no vertices from $w_2,\dots,w_{m-1}$ belong to $B_1$, and moreover by planarity of $J$ there is at least one cyclic block $B_2$ of $J$ with vertices a subset of the $w_1,w_2,\dots,w_m$. We repeat the above procedure with $B_2$ in place of $B_1$, and by finiteness of $J$ we eventually find at least a cyclic block $B$ with vertices all lying on the same path $u_1,u_2,\dots,u_n$ of $H$, such that $u_2,\dots,u_{n-1}$ belong to no other cycle of $J$.

Let $i$ be the length of the cycle bounding $B$. We denote its vertices in order along $u_1,u_2,\dots,u_n$ by
\[u_{k_{1}},u_{k_{2}},\dots,u_{k_{i}}.\]
Then there is a degree $0$ vertex of $G$ along $u_1,u_2,\dots,u_n$ between $u_{k_{l}}$ and $u_{k_{l+1}}$ for each $1\leq l\leq i-1$. We finally remove $B$ from $J$ so that we can argue that any such block added to $J$ increases $a$ by at least $(i-1)-1=i-2$, and the proof of the present lemma is complete.
\end{proof}

\subsection{Conventions}
Everywhere we fix the notation $F=(V,E)$, $|V|=p$ and $|E|=q$ for a $3$-polytope of radius $1$, order $p$, size $q$, and degree sequence $s$ \eqref{eqn:s}. We write $F_1\simeq F_2$ when $F_1,F_2$ are isomorphic graphs. The notation $H,G,s'$ will always be as in Definition \ref{def:G}.

We use $K_p$ for the complete graph on $p\geq 1$ vertices, $C_p$ with $p\geq 3$ for a cycle, and $\calS_n$ for the star graph on $n\geq 2$ edges. A \textit{caterpillar} is a tree where if we delete all degree one vertices, we are left with a central path $c_1,c_2,\dots,c_\ell$, $\ell\geq 1$. The caterpillar depends only on $\ell$ and on $x_j:=\deg(c_j)$, $j=1,\dots,\ell$, and we will denote it by $\calC(x_1,\dots,x_\ell)$. A special case is the star, $\calC(x)=\calS_{x}$.

Notation such as $G-v$, $G+v$, $F-\{e_1,e_2\}$ means removing/adding vertices/edges/sets of vertices or edges from/to a graph.

\subsection{Acknowledgements}
The author was supported by Swiss National Science Foundation project 200021\_184927.
\\
The data of Tables \ref{tab:1}, \ref{tab:2} and \ref{tab:3} was obtained thanks to Scientific IT \& Application Support
(SCITAS) High Performance Computing (HPC) for the EPFL community.

\subsection{Data availability statement}
The code to produce the data of Tables \ref{tab:1}, \ref{tab:2}, and \ref{tab:3} is available on request.

\section{$a=2$: proof of Theorem \ref{prop:1}}
\label{sec:2}
In this section we assume that the number of degree three vertices in $F$ is $a=2$. All figures in this paper are sketches for the graph $F-v$. Its Hamiltonian cycle $H$ is the external cycle.
\begin{lemma}
	\label{le:forest}
The graph $G$ is a forest, and every connected component of $G$ is a caterpillar.
\end{lemma}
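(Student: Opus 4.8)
The plan is to analyse the structure of $G = F - v_1 - E(H)$ using the fact, established in Lemma \ref{le:SG}, that $a \geq 2 + \sum_{i\geq 3}(i-2)B_G(i)$. Since by hypothesis $a = 2$, we immediately get $B_G(i) = 0$ for all $i \geq 3$, i.e. $G$ has no block bounded by a cycle at all; equivalently $G$ contains no cycle, so $G$ is a forest. This disposes of the first assertion quickly. What remains is to show that each connected component $T$ of $G$ is a caterpillar, i.e. that deleting all leaves of $T$ leaves a path.

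For the caterpillar claim, I would argue by contradiction using planarity together with the Hamiltonicity of $H$. Recall that $G$ is drawn inside (say) the region bounded by $H$, with $V(G) \subseteq V(H)$, and the edges of $G$ are non-crossing chords of the cycle $H$. Suppose some component $T$ of $G$, after deleting leaves, is not a path: then it contains a vertex $c$ of degree $\geq 3$ in the trimmed tree, hence $c$ has at least three neighbours in $T$ each of which is either itself of degree $\geq 2$ in $T$ or lies on a path of length $\geq 2$ to such a vertex. The three "branches" of $T$ at $c$ occupy three arcs of $H$ determined by $c$ and can be separated; I would then locate in two different branches a pair of non-leaf vertices (or more carefully, a configuration of chords) that, drawn as non-crossing chords of $H$, forces extra vertices of degree $0$ in $G$ on the arcs between consecutive endpoints — mimicking exactly the counting argument in the proof of Lemma \ref{le:SG}. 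Each such forced degree-$0$ vertex contributes to $a$, and having a genuine branch vertex $c$ produces at least one more such vertex than a path would, giving $a \geq 3$, a contradiction. The same kind of bookkeeping also rules out two disjoint "long" subtrees sharing no structure, so that within a single caterpillar component the central path is forced.

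The key mechanism throughout is the one already used for Lemma \ref{le:SG}: every maximal "span" of a subtree along the Hamiltonian cycle $H$ brackets a sequence of consecutive $H$-vertices, and between consecutive attachment points of that subtree's edges there must sit a vertex of degree $0$ in $G$ (else we could extend and re-apply planarity). Counting these degree-$0$ vertices against the budget $a = 2$ pins down the global shape. I expect the main obstacle to be the careful combinatorial accounting in the caterpillar step: one must make precise what "non-caterpillar" buys in terms of forced degree-$0$ vertices, handling the interaction between several components of $G$ and several branches of one component simultaneously, and being careful that a vertex of $H$ is not double-counted as "forced degree $0$" by two different parts of the argument. Once the correct invariant is isolated (essentially: the number of forced degree-$0$ vertices equals $2$ plus the number of internal branchings of the forest, in analogy with $2 + \sum (i-2)B_G(i)$), the conclusion follows. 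I would also remark that $G$ non-empty is needed only to start the argument, and that the two degree-$3$ vertices of $F$ are precisely two of these forced degree-$0$ vertices of $G$, adjacent to $v_1$ and to two consecutive neighbours on $H$.
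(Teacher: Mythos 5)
Your first half (that $G$ is a forest) is complete and is exactly the paper's argument: with $a=2$, Lemma \ref{le:SG} forces $B_G(i)=0$ for all $i\geq 3$, so $G$ has no cyclic block and hence no cycle. Your plan for the caterpillar half also follows the paper's route: reduce a non-caterpillar component to a vertex with three non-leaf neighbours and count forced isolated vertices of $G$ (equivalently, degree-$3$ vertices of $F$) against the budget $a=2$. However, at the decisive point the proposal stops being a proof. The assertion that ``having a genuine branch vertex $c$ produces at least one more such vertex than a path would, giving $a\geq 3$'' is precisely the statement that needs proving, and you explicitly defer it (``I would then locate \dots a configuration of chords'', ``I expect the main obstacle to be the careful combinatorial accounting''). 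Nothing in the proposal actually exhibits the third isolated vertex, so the caterpillar statement is not established as written.

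The missing step is a short, concrete planarity argument, which is how the paper closes the case (Figure \ref{pic:001}). A non-caterpillar component contains a subdivided claw: a centre $u_0$ joined to three vertices, each of which has a further neighbour in the component. List the three neighbours of $u_0$ in cyclic order along $H$; applying the argument from the proof of Lemma \ref{le:SG} to the two ``outer'' chords at $u_0$ yields one isolated vertex of $G$ on each of the two arcs of $H$ incident with $u_0$ and cut off by those chords. Since edges of $G$ are pairwise non-crossing chords of $H$, the second vertex of the \emph{middle} branch must lie on one of the two arcs between consecutive attachment points not containing $u_0$; applying the same argument to that edge produces a third isolated vertex strictly inside that arc, distinct from the first two, whence $a\geq 3$, a contradiction. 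Two smaller points: the three branches are separated not by $c$ alone but by the endpoints of the chords at $c$ (so the ``three arcs determined by $c$'' phrasing needs this correction), and your final remark about ruling out ``two disjoint long subtrees'' is not part of this lemma at all --- that belongs to the subsequent unigraphicity analysis, whereas the lemma only claims that each component is a caterpillar.
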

\begin{proof}
The first statement is trivial in light of Lemma \ref{le:SG}: the presence of a cycle would yield $a\geq 3$. For the second statement, again by contradiction, any non-caterpillar tree contains the subgraph $A$ depicted in Figure \ref{pic:001}. With the labeling as in Figure \ref{pic:001}, reasoning as in Lemma \ref{le:SG} there are degree $0$ vertices of $G$ along the internally disjoint $u_0u_1$- and $u_0u_3$-paths in $H$. Moreover, by planarity $u_4$ lies on one of the internally disjoint $a_1a_2$- and $a_2a_3$-paths, w.l.o.g. say the $a_2a_3$-path. Then there is a third degree $0$ vertex of $G$ along the $a_2a_4$-subpath.
\begin{figure}[h!]
	\centering
	\begin{subfigure}{.32\textwidth}
		\centering
		\includegraphics[width=3.5cm,clip=false]{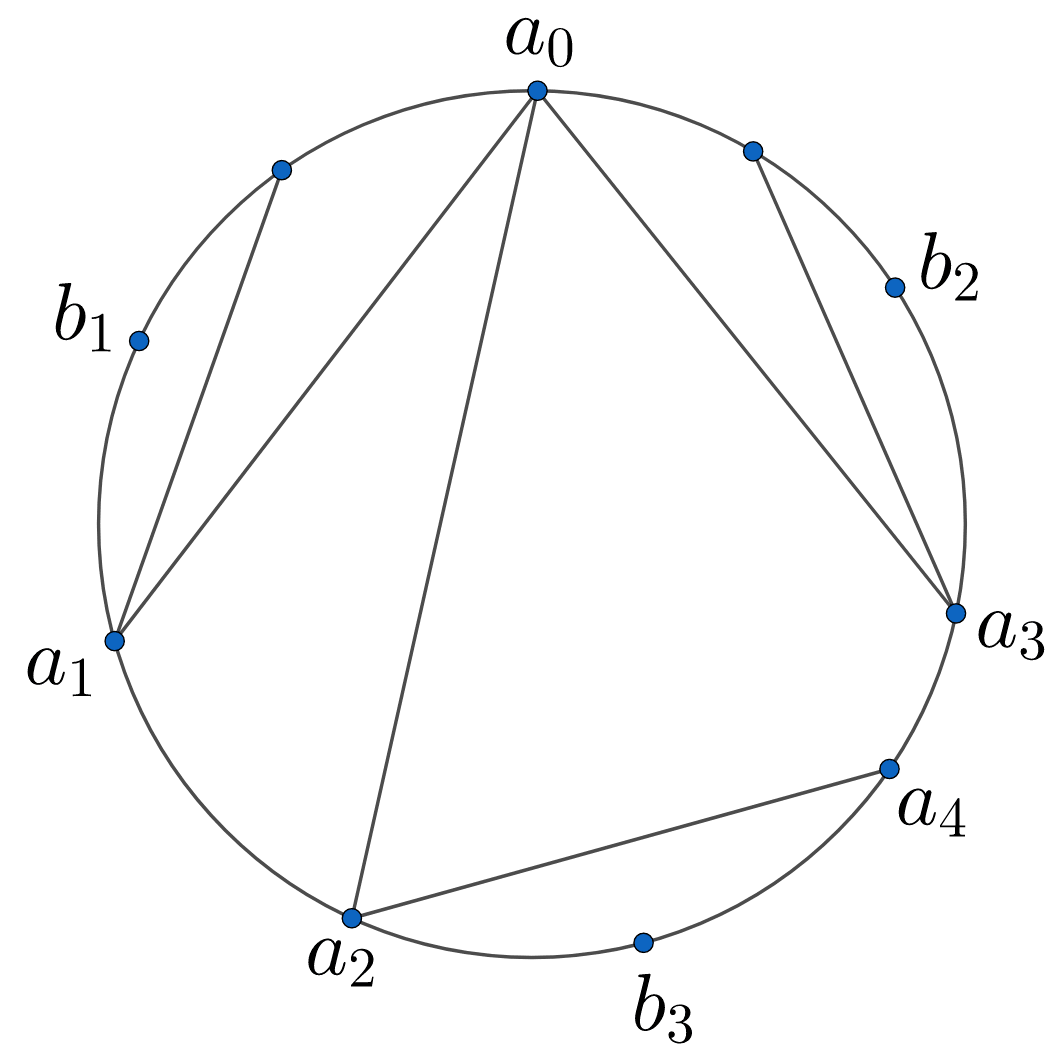}
		\caption{Union of Hamiltonian cycle $H$ and non-caterpillar $A$. There are at least three degree $0$ vertices in $G$, namely $b_1,b_2,b_3$.}
		\label{pic:001}
	\end{subfigure}
	\begin{subfigure}{.32\textwidth}
		\centering
		\includegraphics[width=2.5cm,clip=false]{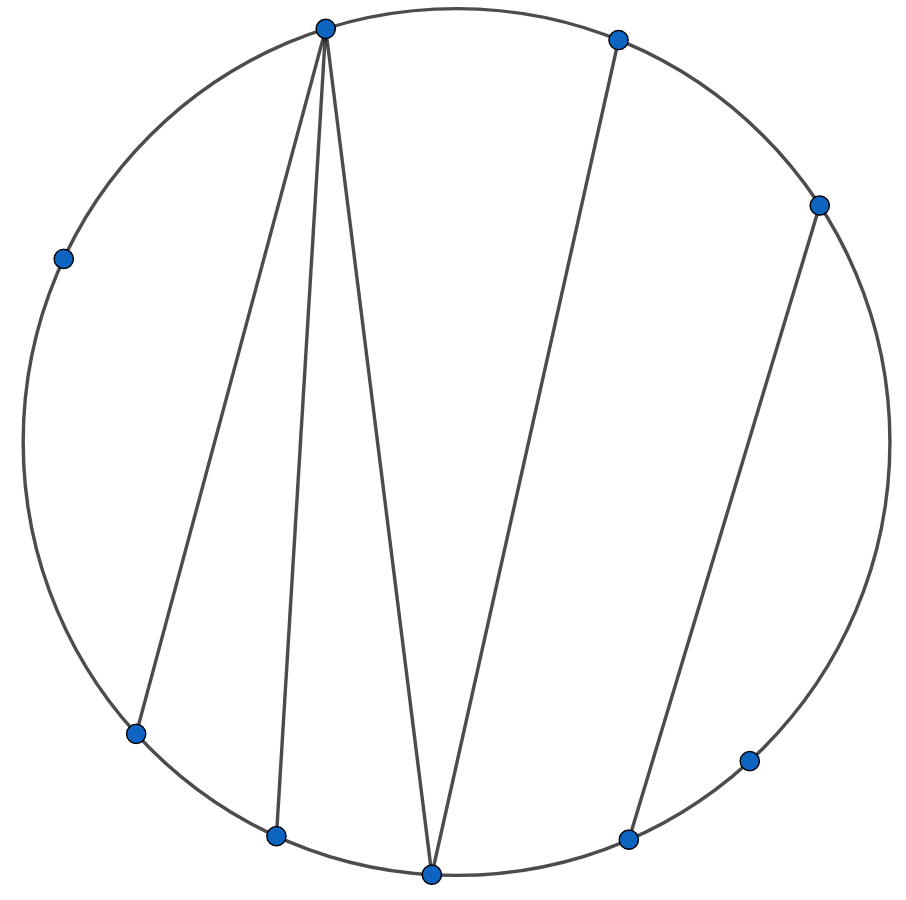}
		\vspace{0.5cm}\\
		\includegraphics[width=2.5cm,clip=false]{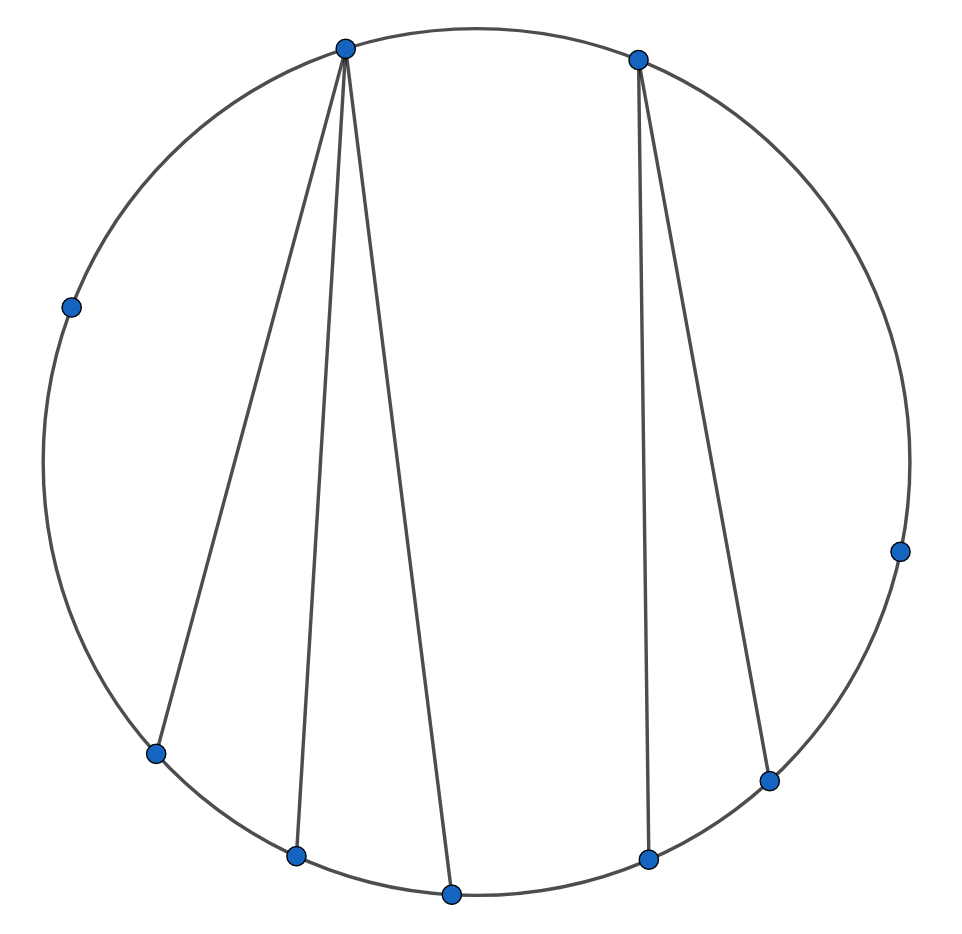}
		\caption{Two realisations of the sequence $3,2,1^5,0^2$.}
		\label{pic:002}
	\end{subfigure}
	\begin{subfigure}{.32\textwidth}
		\centering
		\includegraphics[width=3cm,clip=false]{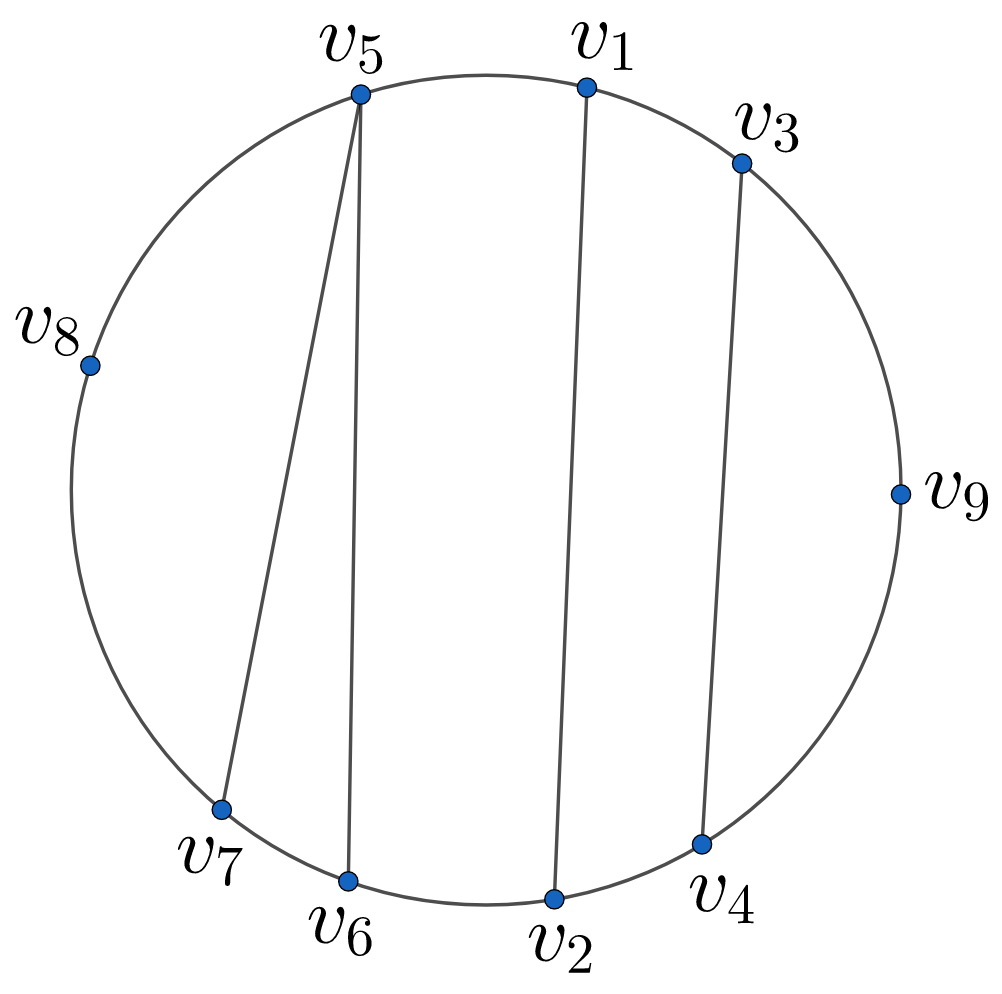}
		\vspace{0.25cm}\\
		\includegraphics[width=3cm,clip=false]{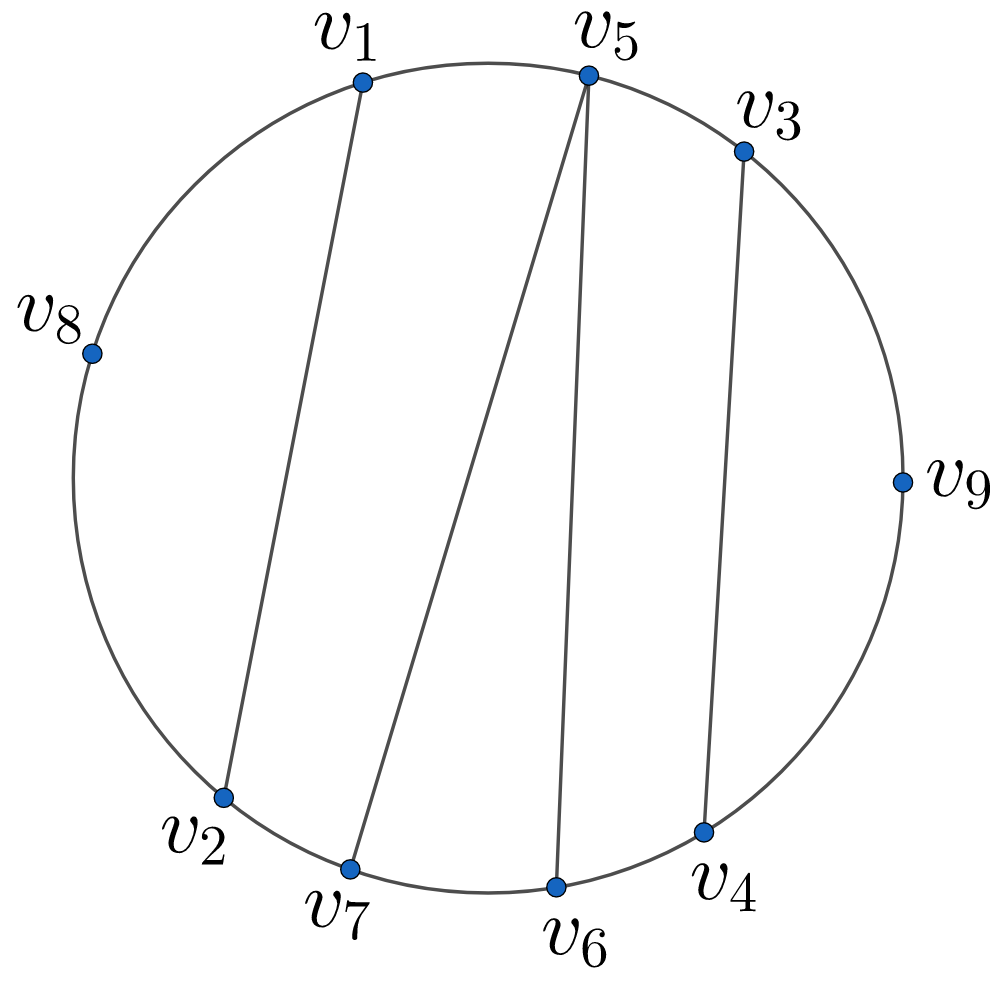}
		\caption{Two realisations of the sequence $2,1^6,0^2$. Here $p=10$ and $t=4$.}
		\label{pic:004}
	\end{subfigure}
	\caption{$a=2$.}
\end{figure}
\end{proof}

We are reduced to characterising the sequences $s'$ with unique realisation as a disjoint union of caterpillars. Since $G$ is a forest, its sequence $s'$ has at least $2k$ $1$'s, where $k$ is the number of non-trivial components (these are non-trivial trees). Now if $s'$ is graphic, then it definitely has a realisation where $k-1$ of these components are just copies of $K_2$. Denote by $T$ the remaining component. For $s$ unigraphic, we certainly have the possibility that $G$ is just the union of $k\geq 1$ copies of $K_2$ and two isolated vertices, i.e. $s$ is of type A1, with $k=(p-1-2)/2$, i.e. $p\geq 5$ and odd.

If $k\geq 2$ but $T\not\simeq K_2$, then we claim that $T\simeq\calS_n$ is a star on $n\geq 2$ edges. To see this, suppose by contradiction that there are in $s'$ at least two values $x,y\geq 2$. Then we would have the two realisations $\calC(x,y)\cup K_2\cup G'$ and $\calS_x\cup \calS_y\cup G'$, where $G'$ is a forest on $k-2$ non-trivial trees, and $\calS$, $\calC$ denote stars and caterpillars -- refer to Figure \ref{pic:002}. Thus indeed $T\simeq\calS_n$, $n\geq 2$. Now we observe that in this case necessarily $k=2$: suppose not for a contradiction. We label 
\[v_1v_2, v_3v_4, \dots, v_{t-1}v_t\]
the copies of $K_2$, $v_{t+1}$ the centre of the star, $v_{t+2},\dots,v_{p-3}$ the remaining vertices of the star, and $v_{p-2},v_{p-1}$ the degree $0$ vertices, where $t$ is even and satisfies $4\leq t\leq p-6$. Then we would have two non-isomorphic realisations of $F$, one with the vertices
\[v_{p-2},v_{t+1},v_1,v_3,\dots,v_{t-1},v_{p-1},v_t,\dots,v_4,v_2,v_{t+2},\dots,v_{p-3}\]
in order around $H$, and another with the order
\[v_{p-2},v_1,v_{t+1},v_3,\dots,v_{t-1},v_{p-1},v_t,\dots,v_4,v_{t+2},\dots,v_{p-3},v_2\]
(Figure \ref{pic:004}). To summarise, $G$ is the disjoint union of two isolated vertices, a copy of $K_2$, and a star $\calS_n$, $n\geq 2$: this is possibility A2.

We are left with the case $k=1$, i.e. $G$ itself is a caterpillar together with two isolated vertices. Now any caterpillar $\calC(x_1,\dots,x_\ell)$ is determined by its length $\ell\geq 1$ and degrees $x_i\geq 2$, $1\leq i\leq\ell$, of vertices along its path. If $x_1\neq x_j$ for any $2\leq j\leq\ell$, we only need to exchange the order of the corresponding vertices along the path to obtain another realisation of $s$, \textit{except if} $\mathit{\ell=2}$. Thereby, either the $x_i$'s are all equal -- type A3, or $\ell=2$ -- type A4.

It is straightforward to check that, on the other hand, the sequences A1, A2, A3, and A4 are indeed unigraphic. This concludes the proof of Theorem \ref{prop:1}.
\begin{rem}
	\label{rem:cat}
	Let $Q_i$ be the set of degree one vertices adjacent to $u_i$ of degree $x_i$ in the caterpillar $\calC(x_1,\dots,x_\ell)$ of $G$. To ensure that $a=2$, the vertices of $\calC(x_1,\dots,x_\ell)$ are placed around $H$ in the order \[Q_1,x_1,Q_2,x_3,Q_4,x_5,\dots,Q_{\ell-2},x_{\ell-1},Q_{\ell},x_{\ell},Q_{\ell-1},x_{\ell-2},\dots,x_4,Q_3,x_2\]
	if $\ell$ is even, and
	\[Q_1,x_1,Q_2,x_3,Q_4,x_5,\dots,Q_{\ell-1},x_{\ell},Q_{\ell},x_{\ell-1},Q_{\ell-2},x_{\ell-3},\dots,x_4,Q_3,x_2\]
	if $\ell$ is odd.
\end{rem}

\section{$a=3$: Proof of Theorem \ref{prop:2}}
\label{sec:3}
In this section we assume that the number of degree three vertices in $F$ is $a=3$. Recall Definition \ref{def:G} for $H,G,s'$.

\begin{lemma}
	\label{le:d3}
	If $s$ is unigraphic, $a=3$, and $p\geq 7$, then $G$ has exactly one cycle, and this cycle is a triangle. Moreover, if we remove the three isolated vertices from $G$, the resulting graph is connected.
\end{lemma}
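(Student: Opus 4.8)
The plan is to bound the number $a$ of degree-three vertices of $F$ from below in terms of the structure of $G$ (using Lemma \ref{le:SG} as the starting point) and to force, whenever $a=3$, that $G$ must look very rigid; then to rule out every configuration for which $s$ would fail to be unigraphic by exhibiting two non-isomorphic realisations, exactly as in the proof of Theorem \ref{prop:1}. First I would invoke Lemma \ref{le:SG}: since $a=3$, the inequality $a\geq 2+\sum_{i\geq 3}(i-2)B_G(i)$ forces $\sum_{i\geq 3}(i-2)B_G(i)\leq 1$, so $G$ has at most one cyclic block, and if it has one, that block is bounded by a triangle ($i=3$). The remaining task is to show that $G$ must in fact contain exactly one cycle (not zero), that it has exactly one nontrivial component, and that this component consists of the triangle plus pendant structure.

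Next I would dispose of the acyclic case. If $G$ is a forest, then by the argument of Lemma \ref{le:forest} every component is a caterpillar, and counting: a forest on $p-1$ vertices with $k$ nontrivial components has degree sequence containing at least $2k$ ones plus the three isolated vertices forced by $a=3$. But here $a=3$ is odd, whereas the analysis in Section \ref{sec:2} shows the forest case produces configurations with an \emph{even} number of forced isolated vertices unless there is extra flexibility; more directly, one shows that any forest $G$ with three isolated vertices and $p\geq 7$ admits the caterpillar-vs-two-stars swap of Figure \ref{pic:002}, or the swap of Figure \ref{pic:004}, hence $s$ is not unigraphic. (This is the step I would write most carefully, since "three isolated vertices" does not by itself preclude unigraphicity — one has to check the small residual cases by hand, and this is presumably why the hypothesis $p\geq 7$ appears, to kill the tiny exceptions.) So $G$ has at least one cycle, and therefore by the first paragraph exactly one, a triangle.

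Then I would argue that the nontrivial part of $G$ is connected. Suppose $G$, after deleting its three isolated vertices, has a component $T'$ disjoint from the triangular component $T$. Since $G$ has only one cyclic block, $T'$ is a tree, hence (Lemma \ref{le:forest}-style) a caterpillar, and in particular contains an edge; one then produces a second realisation by re-routing the pendant/caterpillar pieces around the Hamiltonian cycle $H$ while keeping $a=3$ — the same kind of cyclic reshuffling used for types A2 and A4 and for Figure \ref{pic:004} — contradicting unigraphicity. The key geometric input throughout is the planarity constraint on $G$ drawn inside the region bounded by $H$: any two components, or any caterpillar with a branch point, leave "room" along the disjoint arcs of $H$ to permute vertices freely, and each independent such permutation either creates a new degree-$0$ vertex (raising $a$, already excluded) or yields a genuinely non-isomorphic realisation.

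The main obstacle I anticipate is the bookkeeping in the forest case: showing $p\geq 7$ really does force non-unigraphicity whenever $G$ is acyclic with exactly three isolated vertices, and identifying precisely which small-$p$ configurations are the genuine exceptions (these will reappear as the exceptional sequences $5,4^2,3^3$ and $6,5^3,3^3$ in the statement of Theorem \ref{prop:2}). Everything after that — pinning the unique cycle to a triangle and forcing connectivity of the nontrivial part — follows the template already established in Section \ref{sec:2} and should go through with only routine planar-rearrangement arguments.
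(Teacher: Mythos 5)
Your opening step (Lemma \ref{le:SG} forces $\sum_{i\geq3}(i-2)B_G(i)\leq 1$, hence at most one cyclic block, necessarily a triangle) matches the paper. The genuine gap is in how you dispose of the acyclic case. You propose to rule out every forest $G$ with $p\geq 7$ via the swaps of Figures \ref{pic:002} and \ref{pic:004}, but those swaps require either two spine vertices of degree at least $2$ or several non-trivial components; they do not apply when the non-trivial part of $G$ is a single star $\calS_n$ or a single caterpillar $\calC(x,\dots,x)$ with all spine degrees equal --- precisely the configurations that \emph{were} unigraphic in the $a=2$ analysis, and these form infinite families, not ``tiny exceptions'' killed by $p\geq 7$. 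The paper's mechanism is different and is the idea your plan is missing: with three isolated vertices $b_1,b_2,b_3$ and (since $p\geq 7$) at least three non-isolated vertices $a_1,a_2,a_3$, one can re-insert $b_3$ on more than one of the three arcs of $H$ determined by $a_1,a_2,a_3$ (one arc contains $b_1$, and at least one of the others avoids $b_2$), producing non-isomorphic realisations of $s$ (Figure \ref{pic:006}). Your parity remark about $a=3$ being odd does not lead anywhere, and your aside about exceptions is off: $6,5^3,3^3$ is not a forest exception (there $G$ is a triangle plus three isolated vertices); the only forest exception is $5,4^2,3^3$ at $p=6$, outside the lemma's range.

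The connectivity statement is also underspecified in your plan. The Figure \ref{pic:004}-style reshuffling does reduce to at most one non-trivial tree component besides the component containing the triangle, but it does not dispose of that last case (e.g.\ a triangle plus one star plus three isolated vertices), where repositioning whole components need not break isomorphism. The paper needs a new move there: with $q$ a leaf of the extra caterpillar at spine vertex $x_1$ and $u,v,w$ the triangle, a planarity argument lets one fix the cyclic order $q,u,w,v,x_1$ on $H$, and then the degree-preserving swap $G-qx_1+x_1u-uv+vq$ (note that it deletes a triangle edge and re-attaches the caterpillar to the triangle) yields a second, non-isomorphic realisation. ``Routine planar rearrangement'' of the pieces, as in types A2 and A4, will not by itself produce this; you would need to supply a swap of this kind to close the argument.
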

\begin{proof}
	Thanks to Lemma \ref{le:SG}, we only need to show that $G$ is not a forest. Suppose by contradiction that it is. Every tree degree sequence admits a realisation as a caterpillar. Therefore, if $s$ is unigraphic, and $G$ a forest, then every tree of $G$ is a caterpillar. 
	Call $b_1,b_2,b_3$ the isolated vertices of $G$. If $s$ is unigraphic, then there is a unique realisation of $G-b_3$. However, as soon as $p\geq 7$, there are in $G$ at least three non-isolated vertices $a_1,a_2,a_3$, say. We may re-insert $b_3$ on any of the three disjoint $a_1a_2$-, $a_2a_3$-, $a_3a_1$-paths in $H$. One of these three paths will contain $b_1$, and at least one of the others does not contain $b_2$. There is thus more than one choice to place $b_3$, no $s$ is not unigraphical (e.g. Figure \ref{pic:006}). If $p\leq 6$, we readily find the only way for $s$ to be unigraphic when $a=3$ and $G$ is a forest, i.e. the sequence $5,4^2,3^3$ -- Figure \ref{pic:007}. The first statement of this lemma has thus been proven.
	\begin{figure}[h!]
		\centering
		\begin{subfigure}{.35\textwidth}
			\centering
			\includegraphics[width=3.5cm,clip=false]{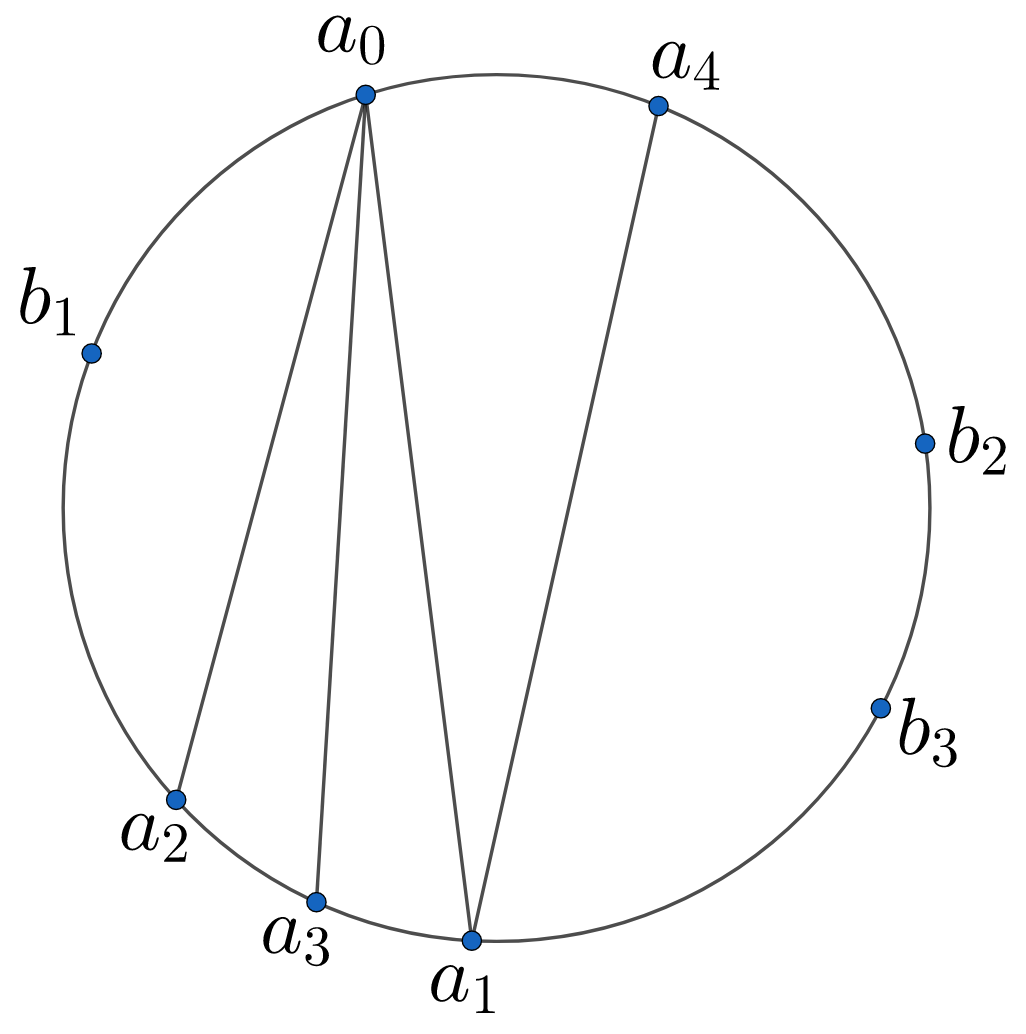}
			\caption{Although $7,6,5,4^3,3^2$ is unigraphic (type A4, $p=8$, $x=4$), the depicted $8,6,5,4^3,3^3$ is not. For instance, moving $b_3$ to the shortest $a_0a_4$-path yields a non-isomorphic realisation.}
			\label{pic:006}
		\end{subfigure}
		\hspace{1cm}
		\begin{subfigure}{.19\textwidth}
			\centering
			\includegraphics[width=2.75cm,clip=false]{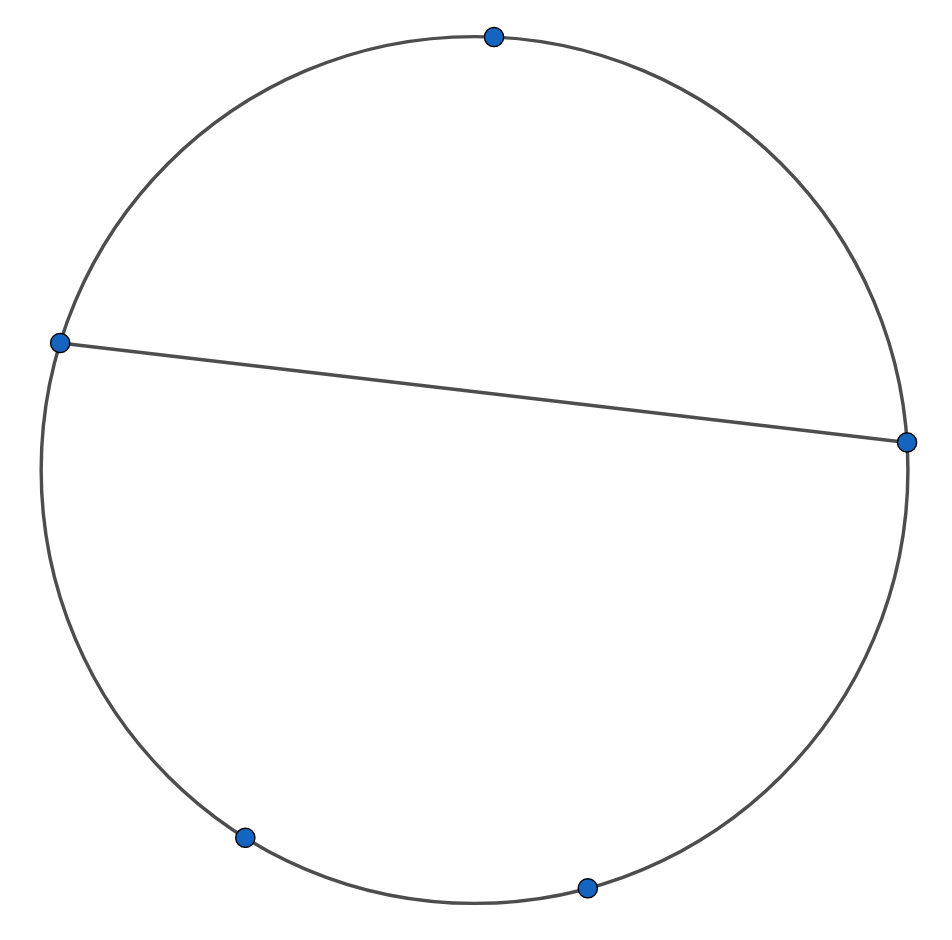}
			\caption{The unigraphic $5,4^2,3^3$.}
			\label{pic:007}
		\end{subfigure}
		\hspace{1cm}
		\begin{subfigure}{.29\textwidth}
			\centering
			\includegraphics[width=3.25cm,clip=false]{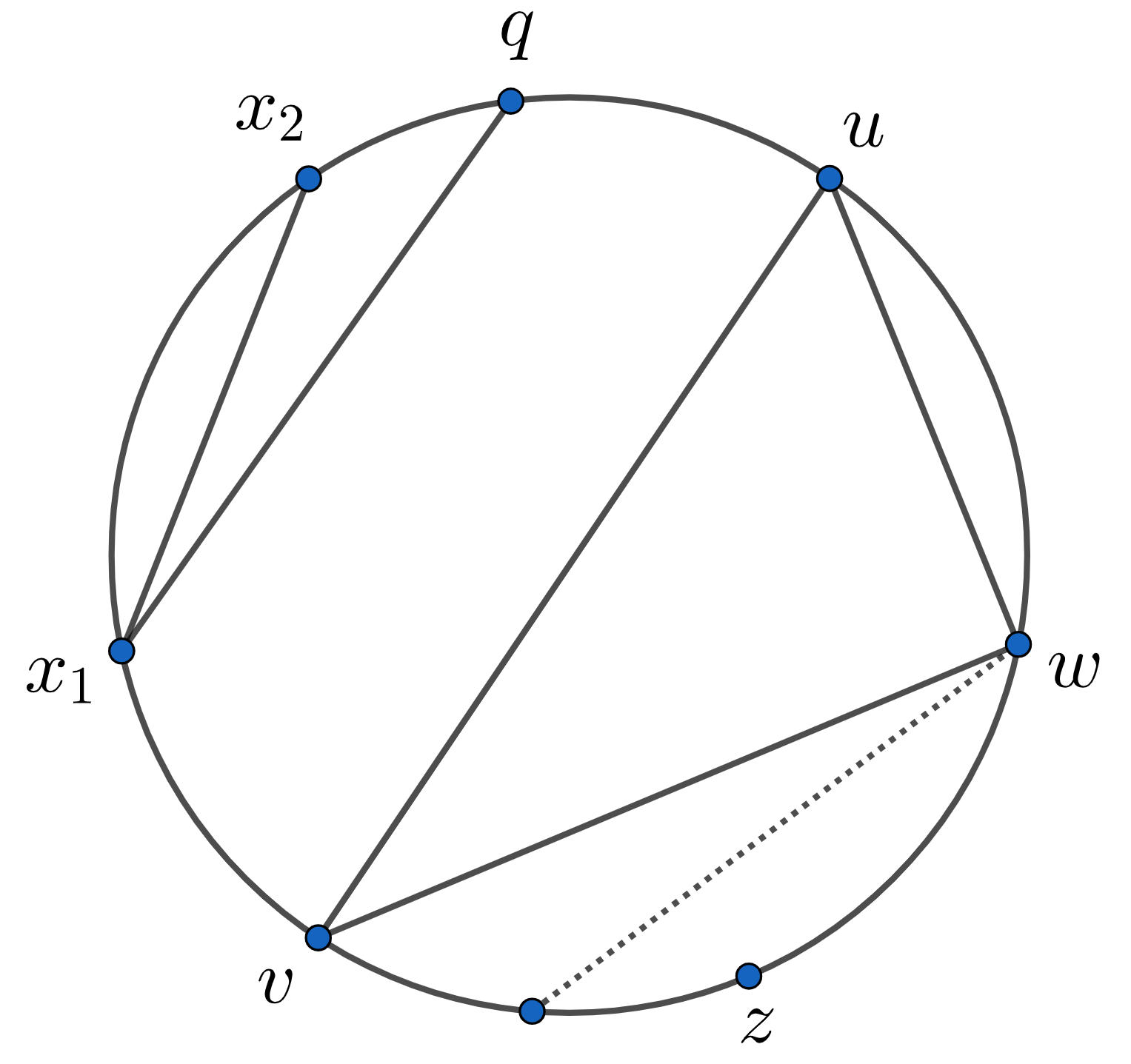}
			\includegraphics[width=3.25cm,clip=false]{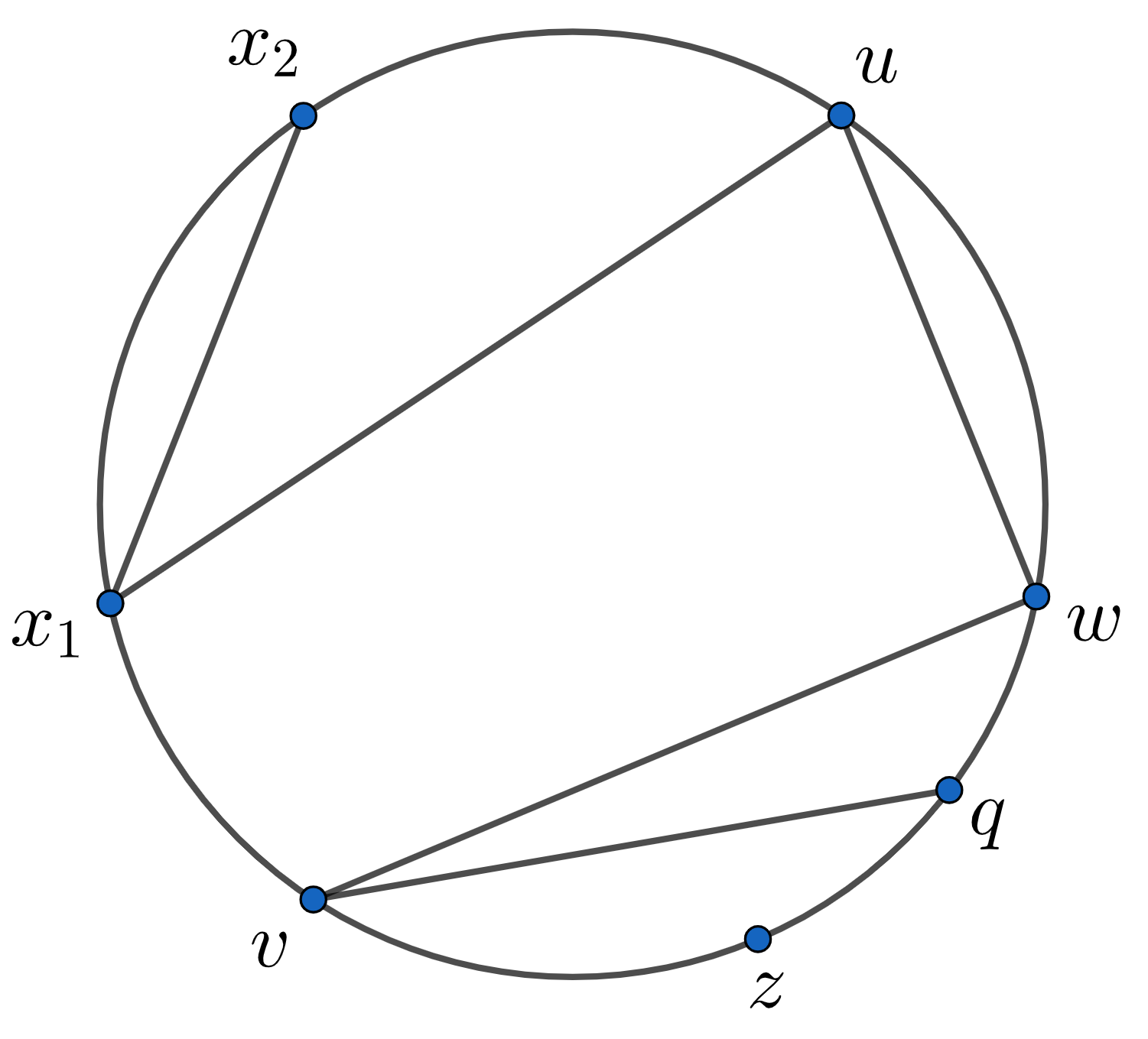}
			\caption{Proof of Lemma \ref{le:d3}, second statement.}
			\label{pic:008}
		\end{subfigure}
		\caption{$a=3$, Lemma \ref{le:d3}.}
	\end{figure}
	
	Turning to the second statement, assume by contradiction that $G$ has more than one non-trivial component. The other components apart from the one containing the triangle are trees. Now we use the same method as in the case A2 in section \ref{sec:2} (c.f. Figure \ref{pic:004}), to see that there is at most one non-trivial tree, and as remarked above, it is a caterpillar by unigraphicity. Denote this caterpillar as usual by $\calC(x_1,\dots,x_\ell)$, and $qx_1\in E$, $\deg_{G}(q)=1$. The vertices of the triangle are $u,v,w$. Let $z$ be an isolated vertex of $G$ on the $vw$-path of $H$ not containing $u$. On this path, by planarity there cannot exist at the same time a vertex $u_1$ between $w$ and $z$ adjacent to $v$ in $G$, and a vertex $u_2$ between $v$ and $z$ adjacent to $w$ in $G$. W.l.o.g, the latter does not happen. We then arrange the various vertices around $H$ so that $q,u,w,v,x_1$ are in this order (Figure \ref{pic:008}, top). We perform the transformation
	\[G-qx_1+x_1u-uv+vq,\]
	with the re-ordering $u,w,q,v,x_1$ around $H$ (Figure \ref{pic:008}, bottom). We have obtained two non-isomorphic realisations of $s$, contradiction.
	
\end{proof}

By inspection, we find for $p=7$ the only solution $6,5^3,3^3$ (where $G$ is simply a triangle together with the three degree $0$ vertices). For the rest of this section assume that $p\geq 8$ and define the non-empty graph
\[G':=G-\{\text{edges of the triangle}\}.\]
By Lemma \ref{le:d3}, the non-trivial components of $G'$ are three trees, and more precisely they are caterpillars (cf. Lemma \ref{le:forest}). Denote by $u,v,w$ the vertices of $G'$ that belong to the triangle in $G$.

\begin{itemize}
	\item
	Let us first consider the scenario when only one of the three mentioned caterpillars is non-trivial, and call it $\calC(x_1,\dots,x_\ell)$. It is connected to vertex $u$ of the triangle, say, and $x_i=\deg(c_i)\geq 2$.
	
	\begin{itemize}
		\item
		Let $\ell=1$, i.e. the only non-trivial caterpillar is a star. If $u$ is at the centre of this star, we get more than one choice for where the other vertices of the star are located, the $uv$-path in $H$ not containing $w$, or the $uw$-path not containing $v$ -- Figure \ref{pic:009}. Only when the star is $K_2$ there is just one choice up to isomorphism, and this is type C1 for $p=8$ -- Figure \ref{pic:011}. If instead $u$ is a degree one vertex of the star, and the star is $\calS_2$, we have type C1 for $p=9$ -- Figure \ref{pic:012}. The centre $c$ cannot have degree $3$ or more: otherwise, we could remove all degree $1$ vertices adjacent to $c$, except $u$ and one more, and insert the same number of degree $1$ vertices adjacent to $v$ -- Figure \ref{pic:013}. This would yield another realisation of $s$.
		
		\begin{figure}[h!]
			\centering
			\begin{subfigure}{.24\textwidth}
				\centering
				\includegraphics[width=3cm,clip=false]{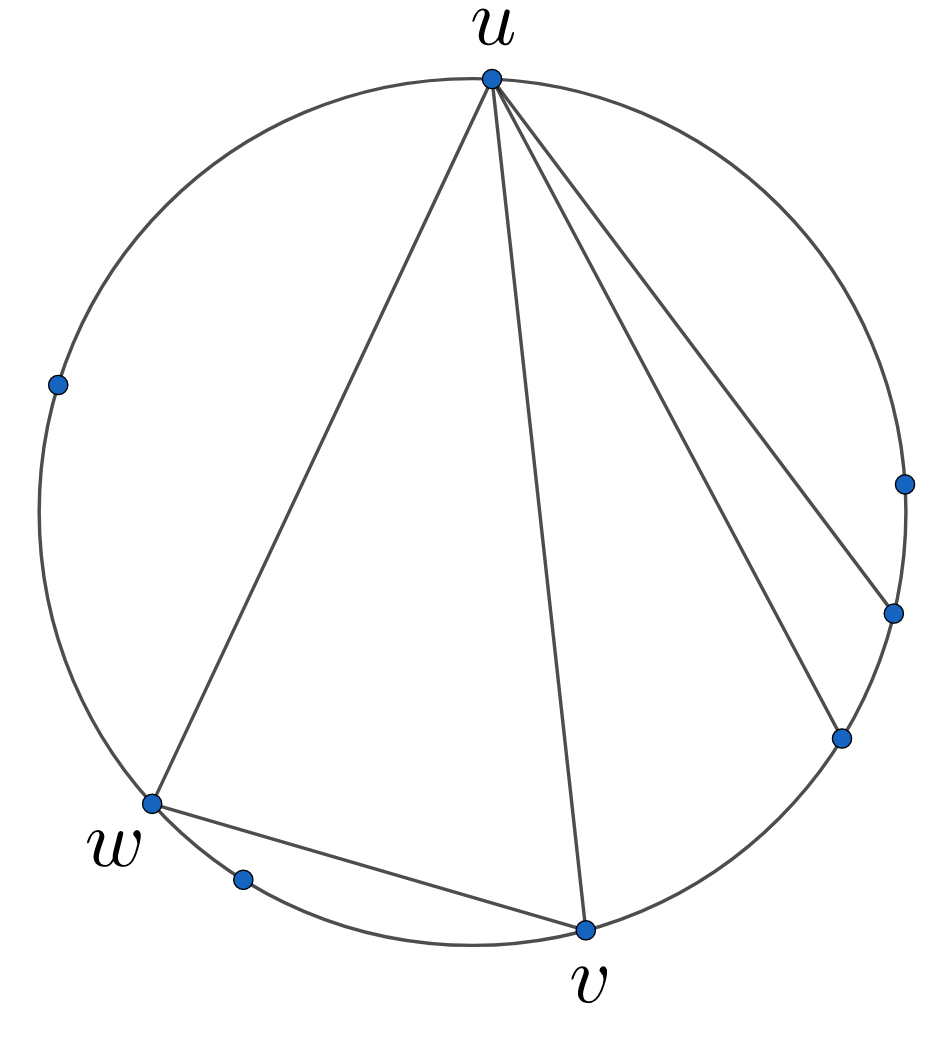}
				\includegraphics[width=3cm,clip=false]{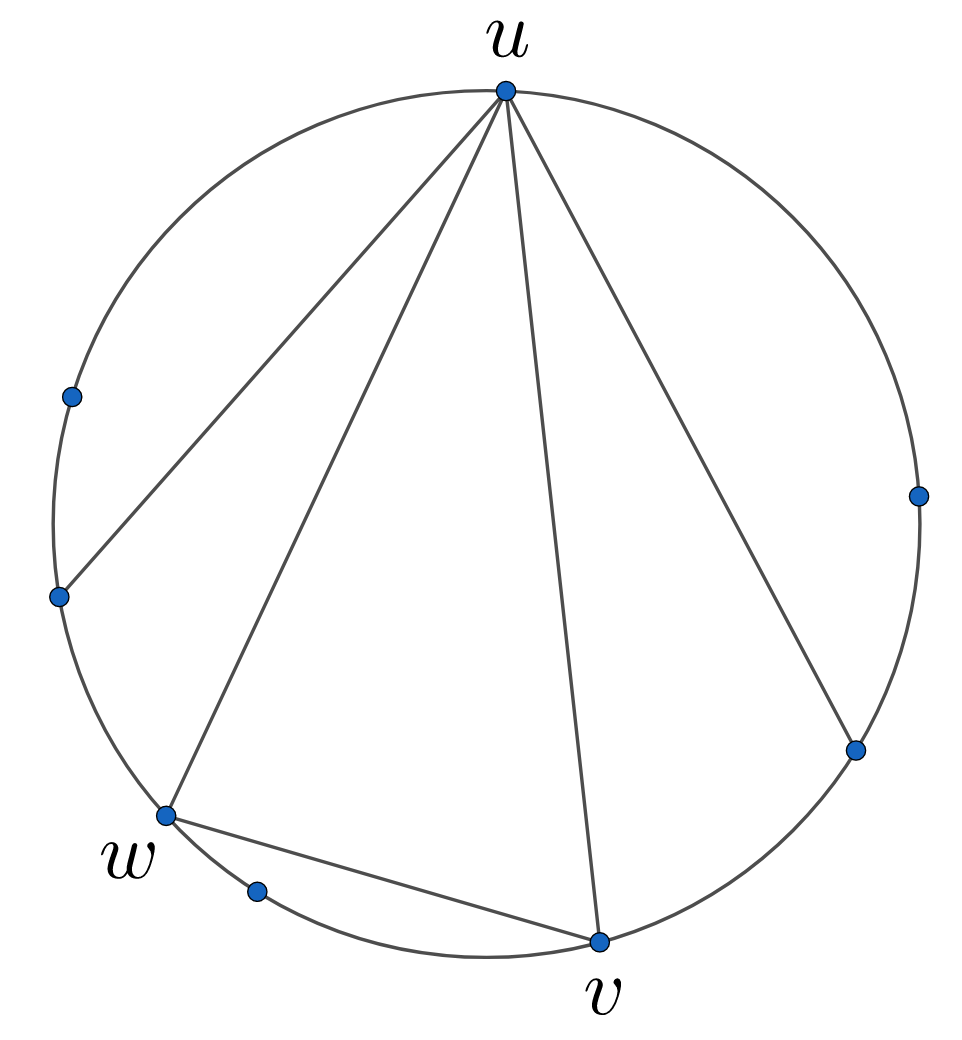}
				\caption{The vertex $u$ is at the centre of the star $\calS_2$.}
				\label{pic:009}
			\end{subfigure}
			\hspace{0.25cm}
			\begin{subfigure}{.20\textwidth}
				\centering
				\includegraphics[width=2.5cm,clip=false]{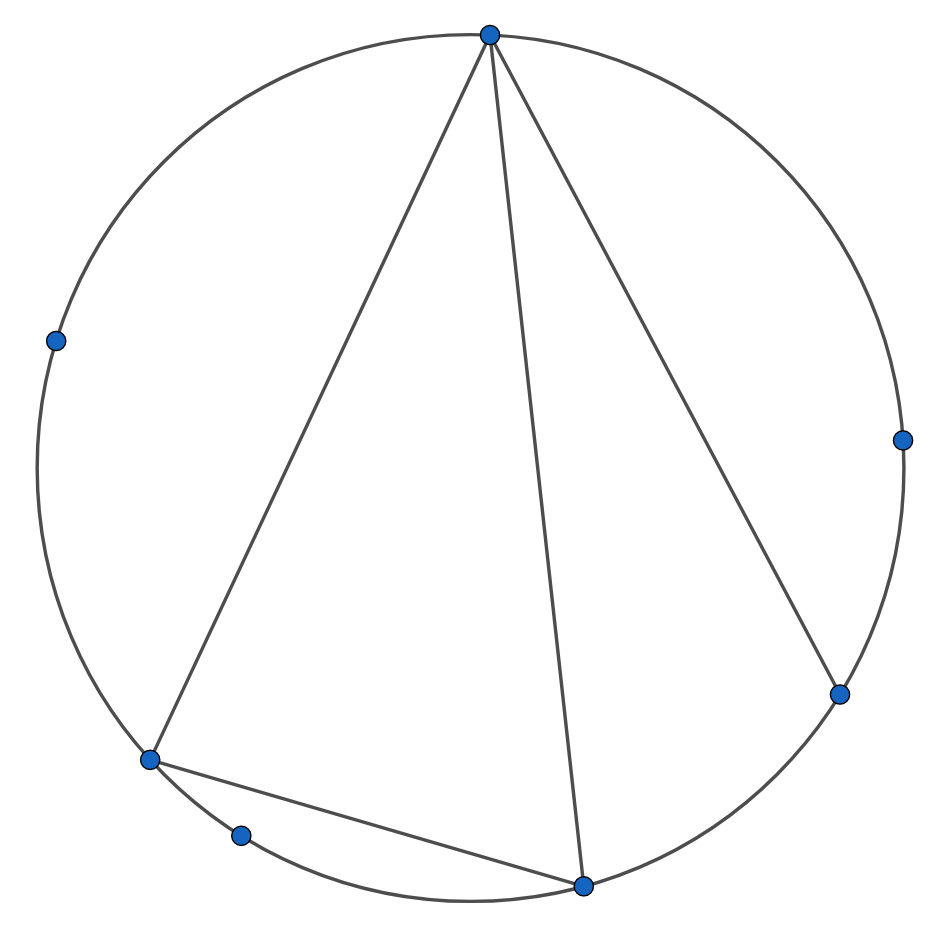}
				\caption{The unigraphic $7,6,5^2,4,3^3$ (type C1, $p=8$).}
				\label{pic:011}
			\end{subfigure}
			\hspace{0.25cm}
			\begin{subfigure}{.20\textwidth}
				\centering
				\includegraphics[width=2.5cm,clip=false]{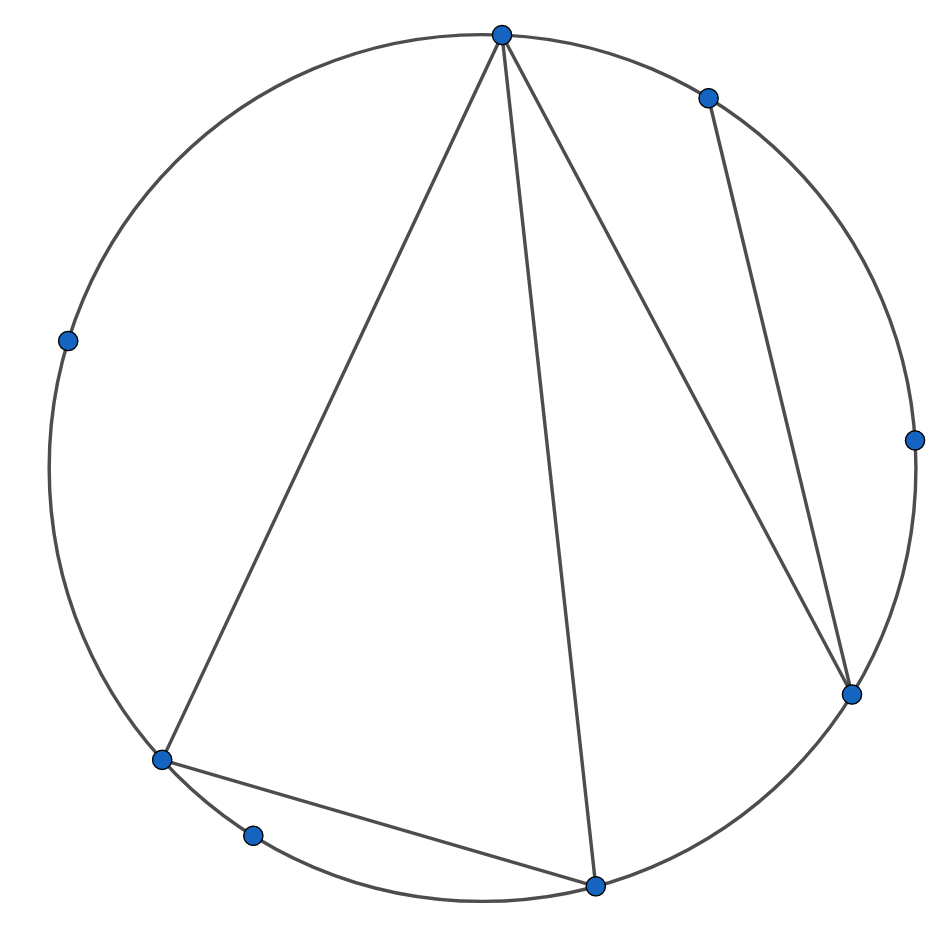}
				\caption{The unigraphic $8,6,5^3,4,3^3$ (type C1, $p=9$).}
				\label{pic:012}
			\end{subfigure}
			\hspace{0.25cm}
			\begin{subfigure}{.24\textwidth}
				\centering
				\includegraphics[width=3cm,clip=false]{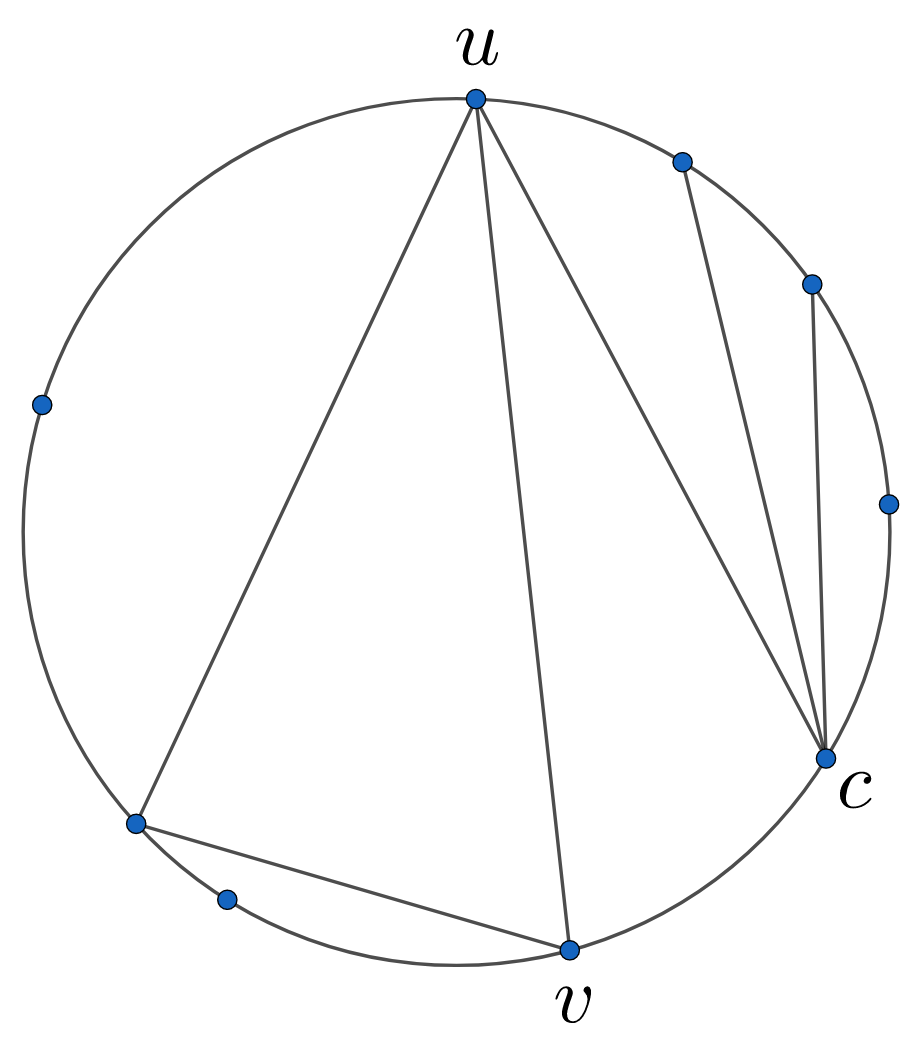}
				\includegraphics[width=3cm,clip=false]{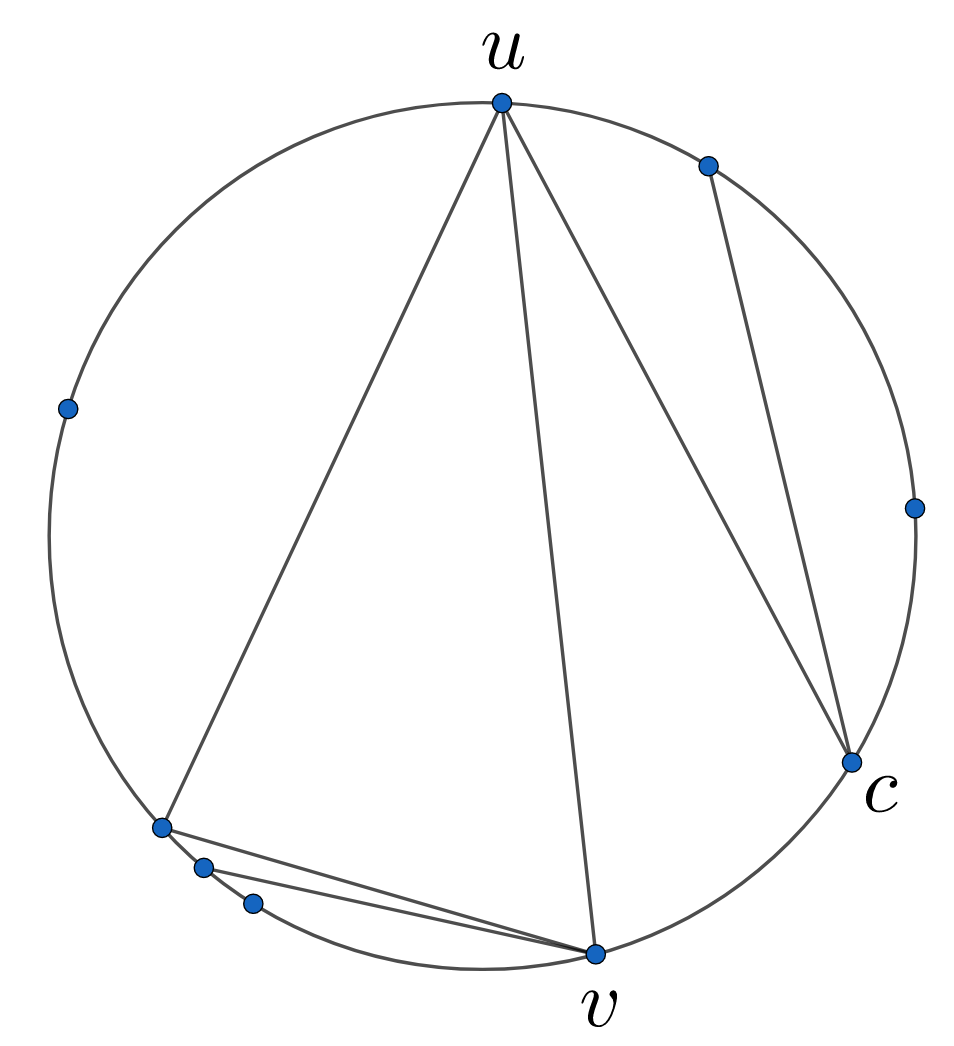}
				\caption{The non-unigraphic $9,6^2,5^2,4^2,3^3$.}
				\label{pic:013}
			\end{subfigure}
			\caption{$a=3$, case of one non-trivial caterpillar with $\ell=1$.}
		\end{figure}
		
		\item
		Now assume that $\ell\geq 2$. We argue that $u$ cannot be one of the vertices $c_i$ along the central path of the caterpillar $\calC(x_1,\dots,x_\ell)$: for $\ell\geq 3$, there would be a different realisation of $s$ as we simply move $u$ along this path -- Figure \ref{pic:015}. For $\ell=2$, we use the trick in Figure \ref{pic:017} to rule out $u=c_i$. Thereby, $u$ is a degree one vertex on $\calC(x_1,\dots,x_\ell)$.
		\\
		If in the caterpillar $\deg(c_i)=x_i\geq 3$ for some $1\leq i\leq\ell$, then similarly to Figure \ref{pic:013} we could remove all degree $1$ vertices adjacent to $c_i$ and insert the same number of degree $1$ vertices adjacent to $v$ to obtain another realisation of $s$. It follows that $x_i=2$ for all $1\leq i\leq\ell$, i.e. $\calC(x_1,\dots,x_\ell)=\calC(2,\dots,2)$, and as argued above, $u$ is at either end of this path. We get type C1 for $p\geq 10$. 
		\begin{figure}[h!]
			\centering
			\begin{subfigure}{.48\textwidth}
				\centering
				\includegraphics[width=2.75cm,clip=false]{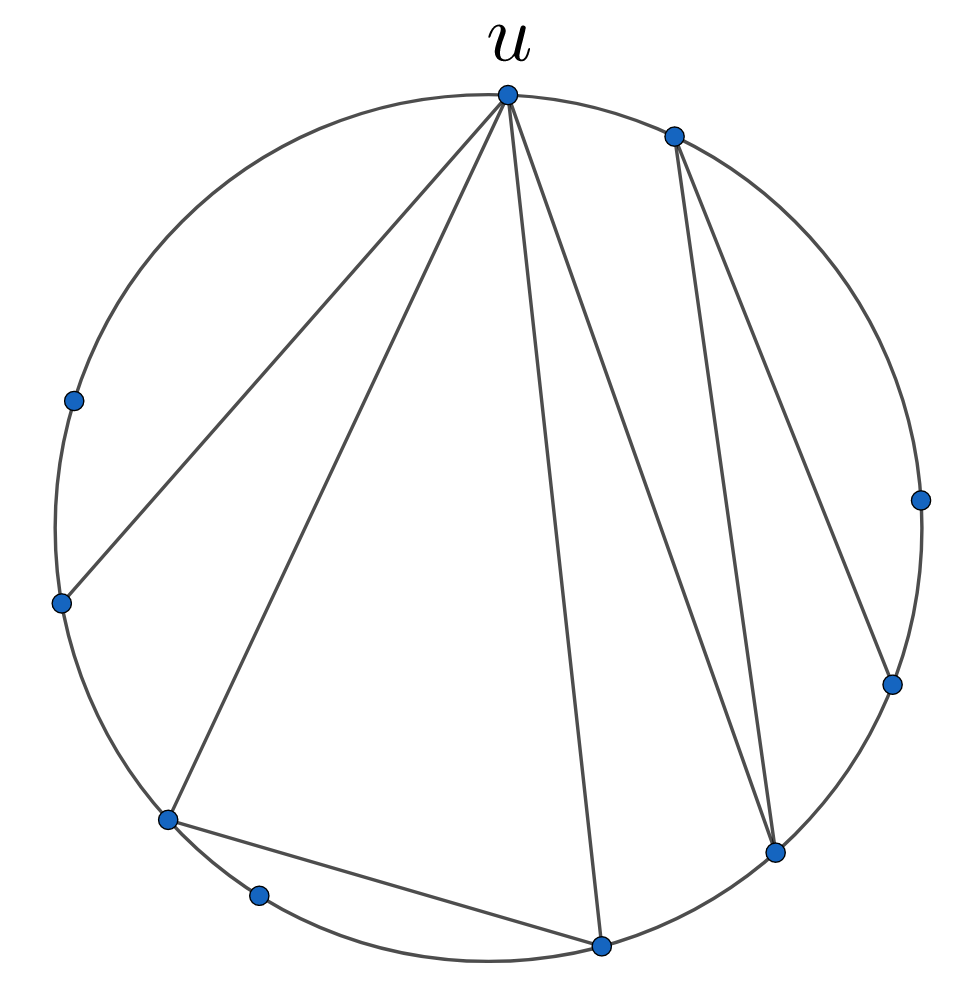}
				\hspace{0.15cm}
				\includegraphics[width=2.75cm,clip=false]{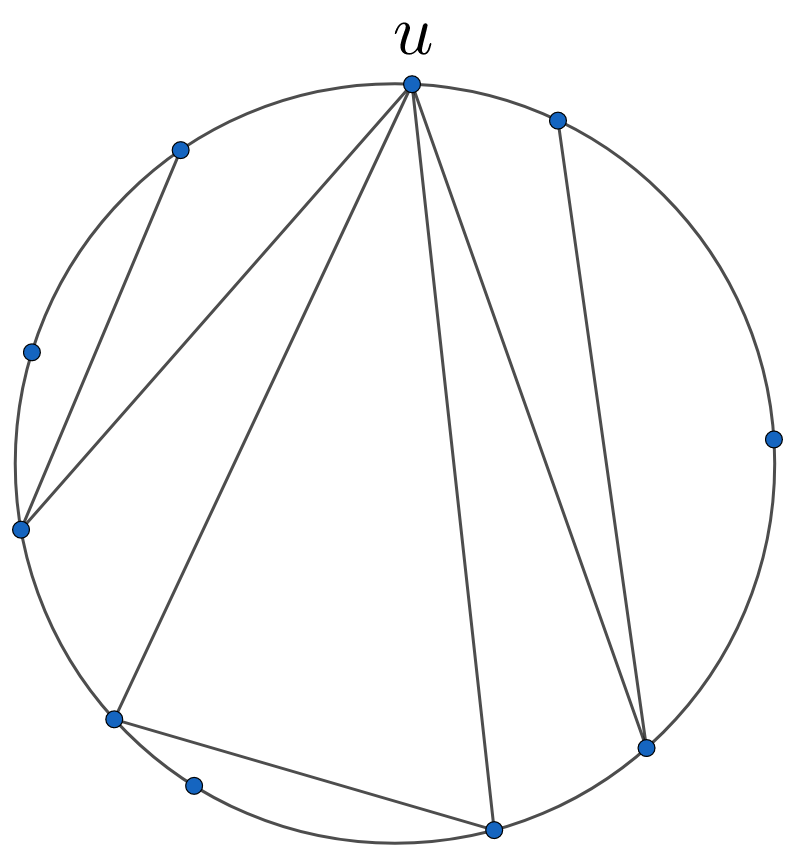}
				\caption{The case $u=c_i$, $\ell=3$: moving $u$ along the path of the caterpillar produces non-isomorphic graphs.}
				\label{pic:015}
			\end{subfigure}
			\hspace{0.25cm}
			\begin{subfigure}{.48\textwidth}
				\centering
				\includegraphics[width=2.75cm,clip=false]{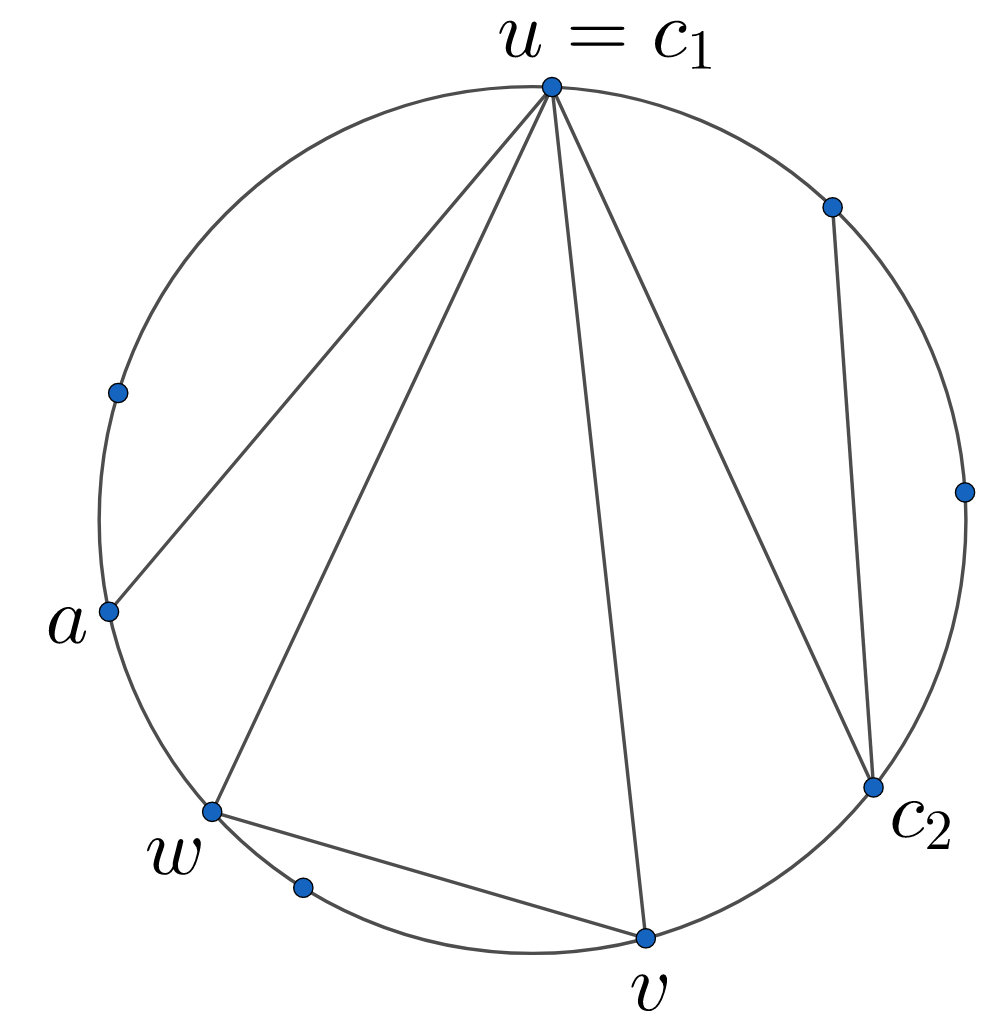}
				\hspace{0.1cm}
				\includegraphics[width=2.75cm,clip=false]{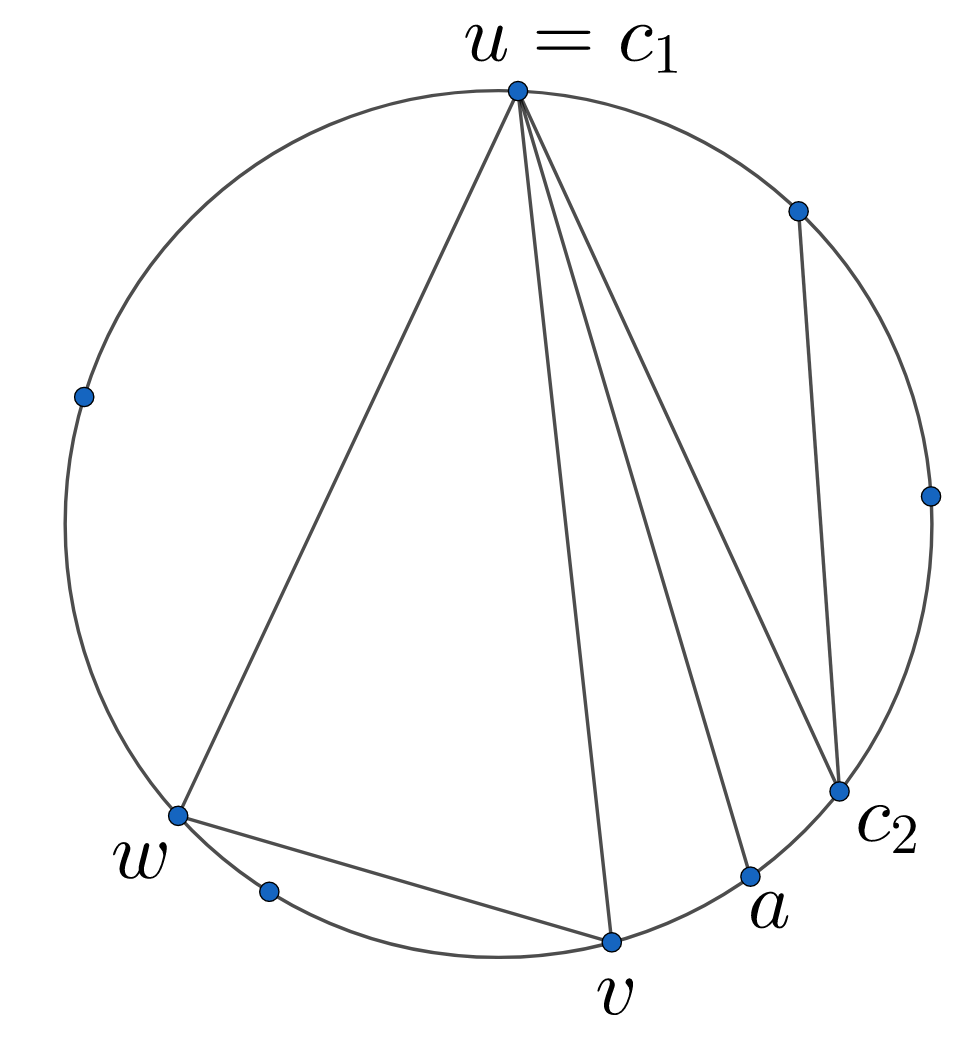}
				\caption{The case $u=c_i$, $\ell=2$: there are two choices for the vertex $a$ that give rise to non-isomorphic realisation of $s$.}
				\label{pic:017}
			\end{subfigure}
			\caption{$a=3$, one non-trivial caterpillar with $\ell\geq 2$.}
		\end{figure}
	\end{itemize}
	\item
	For the case of two non-trivial caterpillars, the above arguments imply that each is either a star centred at $u$ (resp. $v$) or a path with endpoint $u$ (resp. $v$). Similarly to the situation in Figure \ref{pic:009}, there is more than one choice for the subpath of $H$ where the other vertices of the caterpillars could lie.
	
	\item
	We next turn to the case of three non-trivial caterpillars, of lengths $\ell_1,\ell_2,\ell_3$, connected to vertices $u,v,w$ respectively. Similarly to the arguments of section \ref{sec:2}, we have $\ell_1+\ell_2+\ell_3\leq 4$ so that w.l.o.g. we can take $\ell_1\leq 2$ and $\ell_2=\ell_3=1$: otherwise, for $\ell_1+\ell_2+\ell_3=\ell\geq 5$ there would be the two possibilities $\ell_1=\ell-2$, $\ell_2=\ell_3=1$ or $\ell_1=\ell-3\geq\ell_2=2$, $\ell_3=1$, obtained by redistributing the vertices among the central paths of the three caterpillars. This leaves only two possibilities.
	
	\begin{itemize}
		\item
		First, if $\ell_1=\ell_2=\ell_3=1$, this means there are three stars, and by the above ideas these are centred at $u,v,w$. With no loss of generality, assume that
		\[\deg_{G}(u)\geq\deg_{G}(v)\geq\deg_{G}(w).\]
		For each degree $1$ vertex on the star centred at $u$, we have to choose if it lies on the $uv$-path in $H$ not containing $w$, or along the $uw$-path not containing $v$. However, they are all on the same subpath, otherwise $a\geq 4$ -- Figure \ref{pic:019}. Note that this situation arises because all three caterpillars are non-trivial. Further, if they are all on the former path, then all degree $1$ vertices on the star centred at $v$ lie on the $vw$-path not containing $u$, and all degree $1$ vertices on the star centred at $w$ lie on the $wu$-path not containing $v$ (Figure \ref{pic:020}, left). If we make the other choice to start with, we get another realisation of $s$ (Figure \ref{pic:020}, right), \textit{unless at least two of $\mathit{\deg_{G}(u),\deg_{G}(v),\deg_{G}(w)}$ are equal}. We have obtained a graphic sequence of type B1.
		
		\begin{figure}[h!]
			\centering
			\begin{subfigure}{.32\textwidth}
				\centering
				\includegraphics[width=3cm,clip=false]{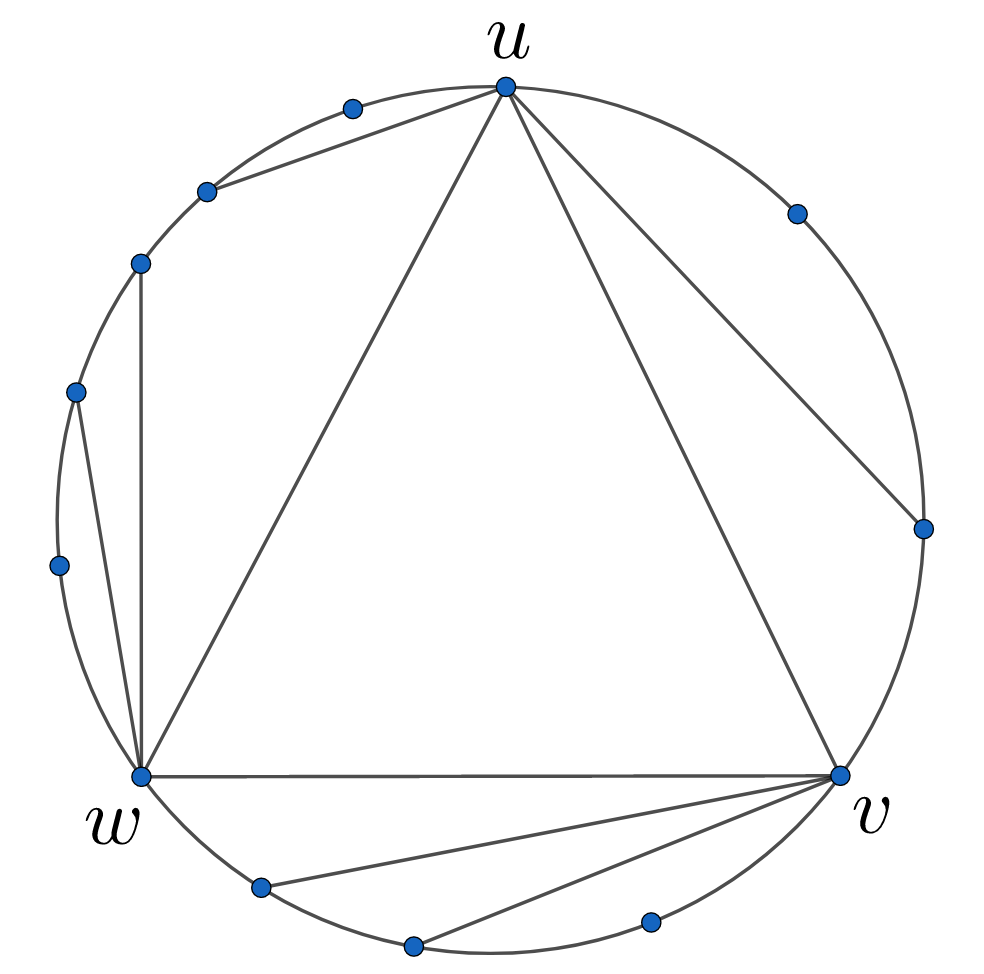}
				\caption{The degree $1$ vertices adjacent to $u$ are not all on the same $uv$- or $uw$-subpath. Thus $a\geq 4$.}
				\label{pic:019}
			\end{subfigure}
			\hspace{0.25cm}
			\begin{subfigure}{.64\textwidth}
				\centering
				\includegraphics[width=3cm,clip=false]{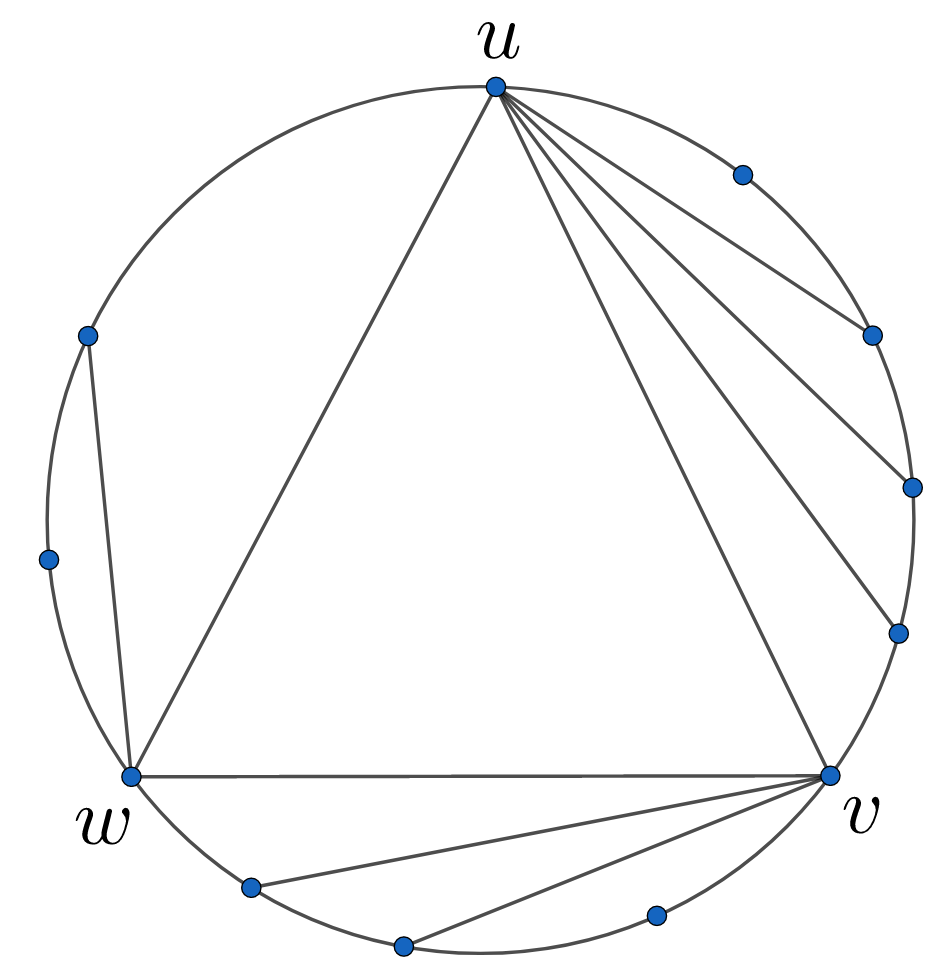}
				\hspace{0.4cm}
				\includegraphics[width=3cm,clip=false]{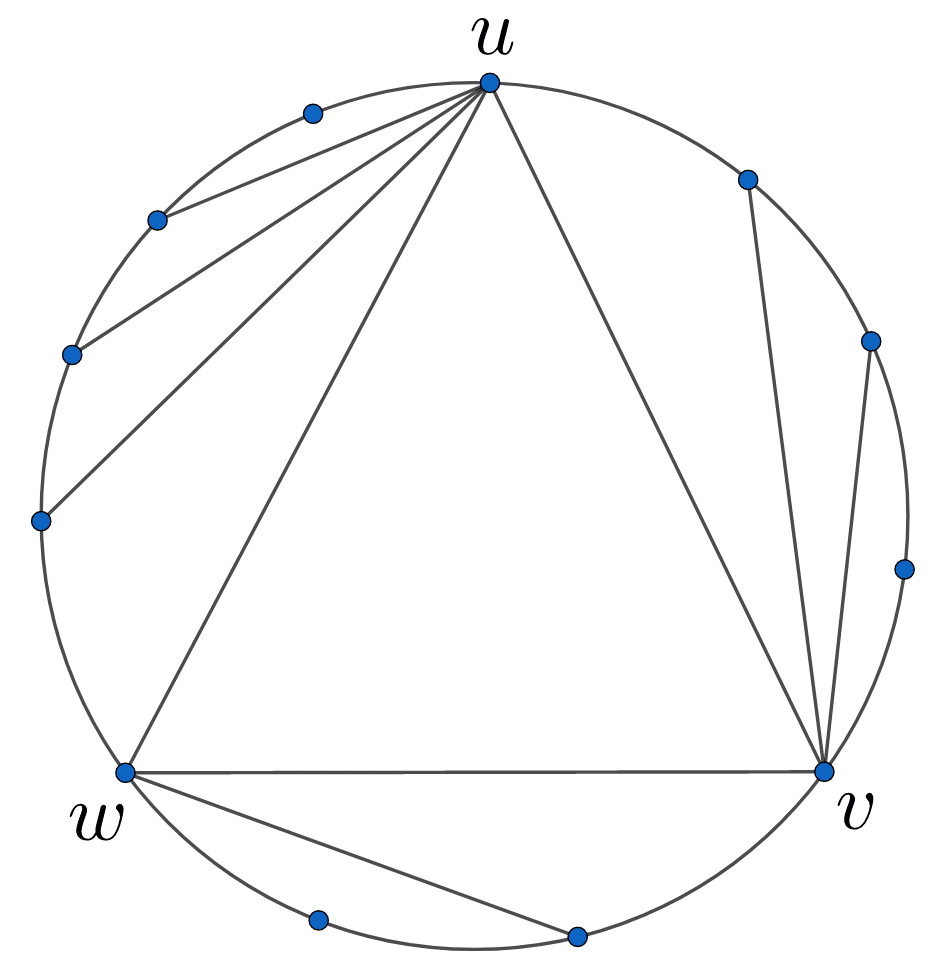}
				\caption{Here $\deg(u)>\deg(v)>\deg(w)$, thus $s$ is not unigraphic.}
				\label{pic:020}
			\end{subfigure}
			\caption{$a=3$, three non-trivial caterpillars, $\ell_1=\ell_2=\ell_3=1$.}
		\end{figure}
		
		\item
		Second and last, $\ell_1=2$, $\ell_2=\ell_3=1$, and by the above arguments $u=c_1$ along the length $2$ caterpillar. Then there are exactly four vertices in $G$ of degree $\geq 2$, namely $u,c_2,v,w$. We use unigraphicity to deduce that these four degrees must all be equal, yielding type D1 -- Figure \ref{pic:022}.
		
		\begin{figure}[h!]
			\centering
			\begin{subfigure}{.30\textwidth}
				\centering
				\includegraphics[width=3cm,clip=false]{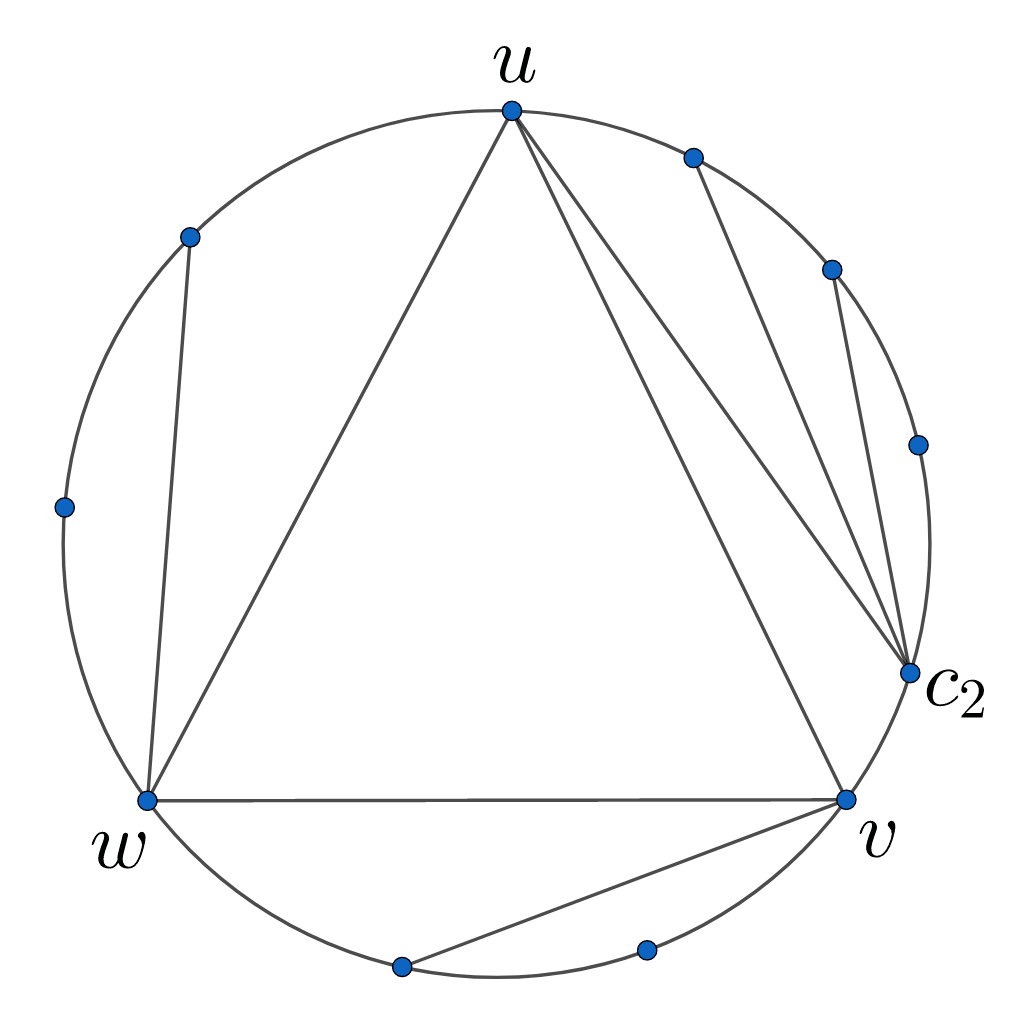}
				\caption{Here $\deg(u)=\deg(c_2)=\deg(v)=\deg(w)=3$, yielding type D1 for $p=12$.}
				\label{pic:022}
			\end{subfigure}
			\hspace{0.5cm}
			\begin{subfigure}{.62\textwidth}
				\centering
				\includegraphics[width=3cm,clip=false]{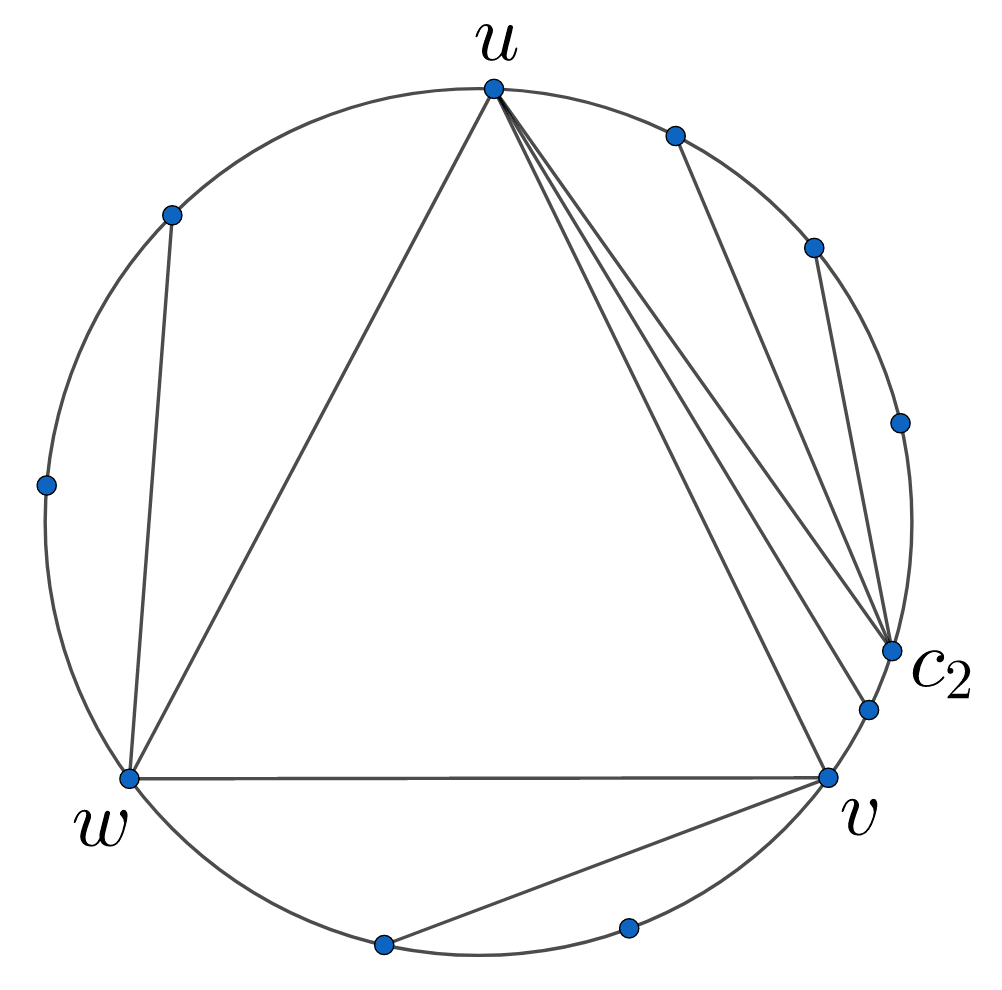}
				\hspace{0.4cm}
				\includegraphics[width=3cm,clip=false]{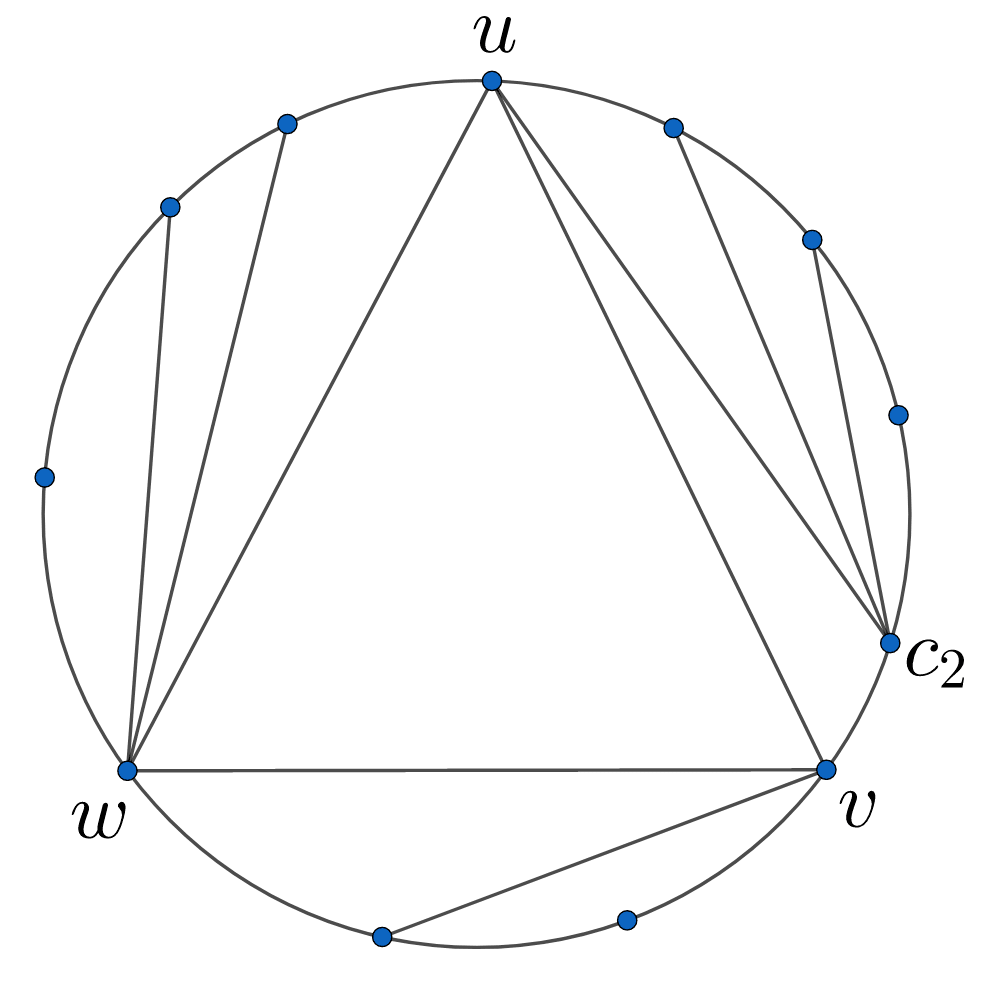}
				\caption{Here $u,c_2,v,w$ are not all of the same degree, thus $s$ is not unigraphic.}
			\end{subfigure}
			\caption{$a=3$, three non-trivial caterpillars, $\ell_1=2$, $\ell_2=\ell_3=1$.}
			\label{pic:022}
		\end{figure}
	\end{itemize}
\end{itemize}
One checks that, on the other hand, sequences of types B1, C1, D1 are all unigraphic. The proof of Theorem \ref{prop:2} is complete.

\clearpage
\bibliographystyle{abbrv}
\bibliography{bibgra}

\begin{thebibliography}{10}

\bibitem{aknisi}
N.~Akg{\"u}ne{\c{s}} and A.~S. {\c{C}}evik.
\newblock A new bound of radius with irregularity index.
\newblock {\em Applied Mathematics and Computation}, 219(11):5750--5753, 2013.

\bibitem{barr22}
M.~D. Barrus.
\newblock The principal {E}rd{\H{o}}s--{G}allai differences of a degree
  sequence.
\newblock {\em Discrete Mathematics}, 345(4):112755, 2022.

\bibitem{dieste}
R.~Diestel.
\newblock Graph theory 3rd ed.
\newblock {\em Graduate texts in mathematics}, 173, 2005.

\bibitem{dillen}
M.~B. Dillencourt.
\newblock Polyhedra of small order and their hamiltonian properties.
\newblock {\em journal of combinatorial theory, Series B}, 66(1):87--122, 1996.

\bibitem{erdgal}
P.~Erd{\"o}s and T.~Gallai.
\newblock Graphs with prescribed degrees of vertices.
\newblock {\em Mat. Lapok}, 11:264--274, 1960.

\bibitem{hakimi}
S.~Hakimi.
\newblock On the realizability of a set of integers as degrees of the vertices
  of a graph.
\newblock {\em SIAM Journal Applied Mathematics}, 1962.

\bibitem{harary}
F.~Harary.
\newblock {\em Graph theory}.
\newblock Addison-Wesley, 1991.

\bibitem{have55}
V.~Havel.
\newblock A remark on the existence of finite graphs.
\newblock {\em Casopis Pest. Mat.}, 80:477--480, 1955.

\bibitem{john80}
R.~Johnson.
\newblock Properties of unique realizations—a survey.
\newblock {\em Discrete Mathematics}, 31(2):185--192, 1980.

\bibitem{koren1}
M.~Koren.
\newblock Sequences with a unique realization by simple graphs.
\newblock {\em Journal of Combinatorial Theory, Series B}, 21(3):235--244,
  1976.

\bibitem{li1975}
S.-Y.~R. Li.
\newblock Graphic sequences with unique realization.
\newblock {\em Journal of Combinatorial Theory, Series B}, 19(1):42--68, 1975.

\bibitem{liwe13}
C.-a. Liu and C.-w. Weng.
\newblock Spectral radius and degree sequence of a graph.
\newblock {\em Linear Algebra and its Applications}, 438(8):3511--3515, 2013.

\bibitem{mprep00}
R.~W. Maffucci.
\newblock Characterising $3$-polytopes of radius one with unique realisation.
\newblock {\em In preparation}.

\bibitem{mafpo2}
R.~W. Maffucci.
\newblock Constructing certain families of $3$-polytopal graphs.
\newblock {\em arXiv preprint arXiv:2105.00022}, 2021.

\bibitem{mafpo3}
R.~W. Maffucci.
\newblock Self-dual polyhedra of given degree sequence.
\newblock {\em arXiv preprint arXiv:2108.01058}, 2021.

\bibitem{mukw12}
S.~Mukwembi.
\newblock A note on diameter and the degree sequence of a graph.
\newblock {\em Applied mathematics letters}, 25(2):175--178, 2012.

\bibitem{kkmnns}
W.~Nazeer, S.~M. Kang, S.~Nazeer, M.~Munir, I.~Kousar, A.~Sehar, and Y.~C.
  Kwun.
\newblock On center, periphery and average eccentricity for the convex
  polytopes.
\newblock {\em Symmetry}, 8(12):145, 2016.

\bibitem{sant12}
F.~Santos.
\newblock A counterexample to the {H}irsch conjecture.
\newblock {\em Annals of mathematics}, pages 383--412, 2012.

\bibitem{radste}
E.~Steinitz and H.~Rademacher.
\newblock Vorlesungen {\"u}ber die {T}heorie der {P}olyeder. {S}pringer 1934.

\bibitem{tutt61}
W.~T. Tutte.
\newblock A theory of 3-connected graphs.
\newblock {\em Indag. Math}, 23(441-455):8, 1961.

\bibitem{whit32}
H.~Whitney.
\newblock Congruent graphs and the connectivity of graphs.
\newblock {\em American Journal of Mathematics}, 54(1):150--168, 1932.

\end{thebibliography}

\Addresses

\end{document}